\documentclass[11pt, a4paper]{amsart} 

\usepackage{geometry}
\geometry{a4paper, margin=1.5in}
\setlength{\parskip}{4pt}

\usepackage{amsmath,amsthm,amssymb}
\usepackage{bbm, hyperref}
\usepackage[utf8]{inputenc}
\newtheorem{theorem}{Theorem}[section]
\newtheorem{lemma}[theorem]{Lemma}
\newtheorem{proposition}[theorem]{Proposition}
\newtheorem{corollary}[theorem]{Corollary}
\newtheorem{conjecture}[theorem]{Conjecture}
\newtheorem{observation}[theorem]{Observation}

\newtheorem{remark}[theorem]{Remark}
\newtheorem{example}[theorem]{Example}

\newtheorem{notation}[theorem]{Notation}
\usepackage{color}

\theoremstyle{remark}
\newtheorem*{rem}{Remark}
\newtheorem*{lem}{Lemma}

\theoremstyle{plain}

\newcommand{\J}{\text{Jac}}
\newcommand{\ld}{\text{lcm denom}}
\newcommand{\ZmZ}{\mathbb{Z}/m\mathbb{Z}}
\newcommand{\ZnZ}{\mathbb{Z}/n\mathbb{Z}}
\DeclareMathOperator{\Ima}{Im}

\DeclareMathOperator{\Div}{Div}
\DeclareMathOperator{\Val}{val}
\DeclareMathOperator{\Prin}{Prin}
\DeclareMathOperator{\Deg}{deg}
\newcommand{\nul}{\text{nul}}
\newcommand{\rk}{\text{rk}}
\newcommand{\rd}[1]{\tilde{#1}}

\newcommand{\Hom}{\operatorname{Hom}}

\begin{document}

\title{Two-Vertex Generators of Jacobians of Graphs}
\author{David Brandfonbrener, Pat Devlin, Netanel Friedenberg,\\ Yuxuan Ke, Steffen Marcus, Henry Reichard, and Ethan Sciamma}
\maketitle
\pagestyle{plain}

\begin{abstract}
We give necessary and sufficient conditions under which the Jacobian of a graph is generated by a divisor that is the difference of two vertices. This answers a question posed by Becker and Glass and allows us to prove various other propositions about the order of divisors that are the difference of two vertices. We conclude with some conjectures about these divisors on random graphs and support them with empirical evidence.
\end{abstract}

\section{Introduction}

Given a finite, undirected, connected multigraph $ G$ without loops, a \textit{divisor} is an assignment of integer values to the vertices. The \textit{degree} of a divisor is the sum of these values. The \textit{Jacobian} of $ G$, denoted $ \J(G)$, is a finite abelian group defined as the quotient of the degree zero divisors by an equivalence relation determined by chip-firing on the graph (see Section~\ref{sec:chipfiring}). The Jacobian is often cyclic, and recent work of several authors \cite{CLP14, CKLPW14, W14} considers the likelihood that a random graph has cyclic Jacobian. It has been conjectured in \cite{CKLPW14} based on a Cohen-Lenstra heuristic and empirical evidence that the probability of cyclic Jacobian in an Erd\H{o}s-R\'enyi random graph goes to $ \prod_{i=1}^\infty \zeta(2i +1 )^{-1} \approx .79$ as the number of vertices goes to infinity. Indeed, Wood in \cite{W14} proved this to be an upper bound, but no nontrivial lower bound on this probability is known.

The central object of study in this paper is the divisor $ \delta_{xy}$ which is -1 at vertex $ x$, 1 at vertex $ y$, and 0 elsewhere. Since the collection of all $\delta_{xy}$ together generate the whole Jacobian, $\delta_{xy}$ presents an ideal candidate for a generator of $\J(G)$. 
In \cite[5.1]{Lor1}, Lorenzini constructs a graph $G_1$ by removing an edge $(x,y)$ from $G$ and proves that the condition $\gcd(|\J(G)|, |\J(G_1)|) = 1$ implies that the Jacobians of $G$ and $G_1$ are both cyclic. Lorenzini's proof does not establish explicit generators for these Jacobians, however. More recently, Becker and Glass \cite[Open~Question~2.8]{BnG} ask whether $\delta_{xy}$ generates $\J(G)$ under the same assumption that $\gcd(|\J(G)|, |\J(G_1)|) = 1$.


In this paper we resolve the question of \cite{BnG} with an affirmative answer, provide a stronger result, and also prove its converse. The approach we take allows us to treat the cases of adding and removing an edge between $x$ and $y$ similarly. Our work strengthens the original theorem of Lorenzini \cite[5.1]{Lor1} and the question of Becker-Glass in two ways: 1) we consider the gcd of $|\J(G)|$ and $|\J(G_1)|$ in general rather than only in the relatively prime case, and 2) we show that $\delta_{xy}$ is an explicit generator of the Jacobian under certain conditions.  

\begin{theorem}\label{maintheorem}
Fix vertices $ x,y $ in $ G$. Let $ G_1 $ be $ G $ with an edge added or 
removed between $ x $ and $ y$. Then
\begin{align*}
[\J(G): \langle\delta_{xy} \rangle] \ \big|\ \gcd(|\J(G)|, |\J(G_1)|)
\end{align*}
and
\begin{align*}
\gcd(|\J(G)|, |\J(G_1)|) \ \big|\ [\J(G): \langle\delta_{xy} \rangle]^2.
\end{align*}
\end{theorem}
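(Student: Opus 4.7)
The plan is to set up a common quotient for $\J(G)$ and $\J(G_1)$ whose order is exactly $[\J(G):\langle\delta_{xy}\rangle]$, and then to leverage a matrix-determinant-lemma identity together with Smith normal form control under a rank-one update.

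The starting observation is that the Laplacians of $G$ and $G_1$ differ by a rank-one perturbation,
\[
L_1 = L + \epsilon\,\delta_{xy}\delta_{xy}^T,
\]
with $\epsilon=+1$ if an edge is added and $\epsilon=-1$ if one is removed. Because $\delta_{xy}\delta_{xy}^T v = (\delta_{xy}^T v)\,\delta_{xy}$ always lies in $\mathbb{Z}\delta_{xy}$, we obtain the equality
\[
\Ima(L) + \mathbb{Z}\delta_{xy} \;=\; \Ima(L_1) + \mathbb{Z}\delta_{xy}
\]
of submodules of $\mathbb{Z}^V$. Quotienting $\ker(\Deg)$ by this common subgroup yields a single finite abelian group $Q$ with $\J(G)/\langle\delta_{xy}\rangle \cong Q \cong \J(G_1)/\langle\delta_{xy}\rangle$. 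Setting $I := |Q|$ gives $I = [\J(G):\langle\delta_{xy}\rangle] = [\J(G_1):\langle\delta_{xy}\rangle]$, and since $I$ divides both $|\J(G)|$ and $|\J(G_1)|$, the first divisibility is immediate.

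For the second divisibility, write $|\J(G)| = Ia$ and $|\J(G_1)| = Ib$, where $a,b$ are the orders of $\delta_{xy}$ in the two Jacobians. Then $\gcd(|\J(G)|,|\J(G_1)|) = I\gcd(a,b)$, so the claim reduces to $\gcd(a,b) \mid I$. Delete vertex $x$ to pass to reduced Laplacians, so that $\tilde L_1 = \tilde L + \epsilon e_y e_y^T$. The matrix determinant lemma gives $|\J(G_1)| - |\J(G)| = \epsilon\det(L_{xy})$, where $L_{xy}$ is the Laplacian submatrix indexed by $V \setminus \{x,y\}$. Because $\det(L_{xy})$ appears as a maximal minor of the presentation $[\tilde L \mid e_y]$ for $Q$, it is divisible by $I$; writing $\det(L_{xy}) = Ic$ we obtain $b = a + \epsilon c$, and hence $\gcd(a,b) = \gcd(a,c)$.

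The main obstacle, and where I anticipate the real technical work, is showing $\gcd(a,c) \mid I$. The natural object is the integer vector $u = \mathrm{adj}(\tilde L)\,e_y / I$, well defined because every entry of $\mathrm{adj}(\tilde L)\,e_y$ is (up to sign) a maximal minor of $[\tilde L \mid e_y]$ and thus divisible by $I$. This $u$ satisfies $\tilde L u = a e_y$ with $u_y = c$, and the primitivity of $(u,-a)$ as a kernel vector of $[\tilde L \mid e_y]$ (which is exactly the statement that $I$ is the gcd of maximal minors) encodes the constraint $\gcd(a,c,\{u_v\}_{v\neq y}) = 1$. Localizing at each prime $p$, if $p^k \mid \gcd(a,c)$ then primitivity produces some coordinate $u_v$ that is a unit modulo $p$, and the interlacing of Smith invariants under the rank-one passage $\tilde L \to \tilde L_1$ should then force $p^k \mid I$. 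Tracking the Smith invariants carefully under this rank-one update is where the bulk of the argument will live.
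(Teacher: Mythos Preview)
Your argument for the first divisibility is correct and genuinely different from the paper's. The paper computes the index $[\J(G):\langle\delta_{xy}\rangle]$ via the order formula $|[\delta_{xy}]|=\det\tilde L/\gcd(\det\tilde L,\,C^{\tilde L}e_y)$ and then uses the spanning-tree interpretation of $C^{\tilde L}_{yy}$ to relate it to $|\J(G_1)|$. Your observation that the rank-one identity $L_1=L+\epsilon\,\delta_{xy}\delta_{xy}^T$ forces $\Ima(L)+\mathbb{Z}\delta_{xy}=\Ima(L_1)+\mathbb{Z}\delta_{xy}$, and hence $\J(G)/\langle\delta_{xy}\rangle\cong\J(G_1)/\langle\delta_{xy}\rangle$, is cleaner and more conceptual; it also yields the symmetric statement for $G_1$ for free.

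For the second divisibility, your reduction to $\gcd(a,c)\mid I$ is correct, and your vector $u=C^{\tilde L}e_y/I$ is exactly the right object, but the proposed finish via ``interlacing of Smith invariants under the rank-one update'' is a gap: I do not see a mechanism by which the existence of a $p$-unit coordinate $u_v$ together with interlacing forces $p^k\mid I$, and you have not supplied one. The paper closes this gap with a short pairing argument that, happily, uses precisely your $u$. The vector $Iu=C^{\tilde L}e_y$ is a monodromy weight and defines a homomorphism $\phi:\J(G)\to\mathbb{Z}/m\mathbb{Z}$ by $[D]\mapsto (Iu)\cdot\tilde D$. Your primitivity statement $\gcd(a,u_1,\dots,u_{n-1})=1$ gives $\gcd(m,Iu)=I$, so $|\ker\phi|=I$. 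Restricting $\phi$ to $S=\langle\delta_{xy}\rangle$, one has $\phi(\delta_{xy})=Iu_y=Ic$, hence $|\ker(\phi|_S)|=\gcd(a,c)$. Since $\ker(\phi|_S)\subseteq\ker\phi$, the divisibility $\gcd(a,c)\mid I$ follows immediately. Equivalently, this is just the perfectness of the monodromy pairing: $S^\perp$ has order $I$, the self-pairing $\langle\delta_{xy},\delta_{xy}\rangle=c/a$ shows $|S\cap S^\perp|=\gcd(a,c)$, and $S\cap S^\perp\subseteq S^\perp$. Replace the Smith-interlacing sketch with this two-line argument and your proof is complete.
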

\noindent For more general and precise statements of this theorem, see Theorems~\ref{T:GOQ2.8} and \ref{GCOQ}. Theorem~\ref{maintheorem} implies the following corollary.

\begin{corollary}\label{ifftheorem}
The divisor $\delta_{xy}$ is a generator of $ \J(G) $ if and only if\begin{align*}
\gcd(|\J(G)|, |\J(G_1)|) = 1,
\end{align*} where $ G_1$ is the graph $ G$ with an edge added or removed between $ x $ and $ y $. 
\end{corollary}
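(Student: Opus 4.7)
My plan is to extract the corollary directly from the two divisibility statements of Theorem~\ref{maintheorem}. I would set $N := [\J(G) : \langle \delta_{xy}\rangle]$ and $d := \gcd(|\J(G)|, |\J(G_1)|)$, so that $\delta_{xy}$ generates $\J(G)$ precisely when $N = 1$, while the gcd condition reads $d = 1$. The whole corollary thus reduces to the numerical equivalence $N = 1 \iff d = 1$.

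The two divisibilities $N \mid d$ and $d \mid N^2$ supplied by Theorem~\ref{maintheorem} deliver both implications immediately. For the forward direction, I would argue that if $N = 1$ then $d \mid N^2 = 1$ forces $d = 1$. For the backward direction, if $d = 1$ then $N \mid d = 1$ forces $N = 1$. Since the hypotheses of Theorem~\ref{maintheorem} apply verbatim whether $G_1$ is obtained by adding or by removing an edge at $xy$, a single invocation suffices for each choice of $G_1$, and the corollary follows in both cases.

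There is essentially no obstacle to proving the corollary once Theorem~\ref{maintheorem} is in hand; the real work lies in establishing that theorem. A free byproduct worth highlighting is that, when both versions of $G_1$ are defined (i.e.\ when $xy$ is an edge that can be either duplicated or deleted), the two potentially distinct gcd conditions must coincide, since each is equivalent to the single intrinsic condition $N = 1$. Because the argument is just bookkeeping on divisibility, I would expect it to read as a short paragraph rather than a structured proof.
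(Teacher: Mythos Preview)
Your proposal is correct and is exactly the paper's approach: the authors simply note that Theorems~\ref{T:GOQ2.8} and~\ref{GCOQ} (summarized as Theorem~\ref{maintheorem}) yield the corollary by specializing to the case $\gcd(|\J(G)|,|\J(G_1)|)=1$, and your argument spells out the two one-line divisibility deductions they leave implicit. Your side remark about the add-edge and remove-edge gcd conditions being equivalent is a nice bonus not stated in the paper.
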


These results relate the order of $ \delta_{xy} $ to deleting and inserting edges in $ G$. Our main theorem is proven in Section~\ref{sec:maintheorem}. In Section~\ref{sec:contractions} we show that similar results hold under edge contraction. In Section~\ref{sec:bounding} we move on to considering lower bounds on the order of $ \delta_{xy}$. Finally, with the hope of eventually extending statements about $ \delta_{xy}$ to random graphs we provide some conjectures and empirical evidence in Section~\ref{sec:random} relating to the probability that some $ \delta_{xy} $ generates the Jacobian.

\subsection{Acknowledgments}
This project was completed as part of the 2017 Summer Undergraduate Mathematics Research at Yale (SUMRY) program, where the third and fifth authors were supported as mentors and the first, fourth, sixth, and seventh authors were supported as participants. We thank everyone in the program for fostering a productive research environment. We benefited from conversations with  Darren Glass, Dave Jensen, and Dhruv Ranganathan. We thank Sam Payne for suggesting the problem. 

\section{Background}
\subsection{Chip-firing and the Jacobian of graphs}\label{sec:chipfiring}

Unless specified otherwise, throughout this paper we will assume that any graph $G$ is a connected, undirected multigraph without loops. Let $V(G)$ and $E(G)$ denote the vertex and edge sets of $G$ respectively, and let $n = |V(G)|$ be the number of vertices. For a vertex $v\in V(G)$, let $\Val(v)$ denote the valency of $v$ --- i.e. the number of edges incident to $v$.

Following \cite{baknor}, a \emph{divisor} $D \in \mathbb{Z}^n$ on $G$ is a formal $\mathbb{Z}$-linear combination of the vertices of $G$. Divisors are often interpreted as assignments of an integer number of ``chips'' to each of the vertices of $G$. The \emph{degree} of a divisor $\Deg(D) = \sum_{v \in V(G)}D(v)$ is the divisor's total number of chips. $\Div(G)$ denotes the group of all divisors on $G$, and $\Div^0(G)$ denotes the subgroup of divisors with degree zero.

There is an equivalence relation on the elements of $\Div(G)$ based on the well-known \emph{chip-firing game}, which is defined as follows. Given any divisor $D \in \Div(G)$, let $K_{v,w}$ denote the number of edges between two vertices $v,w \in V(G)$. We can \emph{fire} a vertex $v$ by sending one chip from $v$ along each of its incident edges to adjacent vertices. This operation produces a new divisor $D'$, given by:
\[
	D'(w) =
    \begin{cases}
    	D(v) - \Val(v) &\text{if }  w=v \\
        D(w) + K_{v,w} &\text{if w is adjacent to v} \\
        D(w)  &\text{otherwise}.
    \end{cases}
\]
Note that any chip-firing move at a given vertex can be reversed by firing at all of the other vertices. We define a \emph{firing script} to be a vector $\sigma \in \mathbb{Z}^n$ whose entries specify the number of times each vertex in $G$ should be fired. We say that two divisors $D_1$ and $D_2$ are \emph{equivalent} if there exists a firing script taking one to the other. We denote this relation by $\sim$. Note that the degree of a divisor is invariant under chip-firing. The set of \emph{principal divisors} $\Prin(G)$ is the set of divisors equivalent to the divisor with zero chips on every vertex, which we will call the \emph{zero divisor}.

The \emph{Jacobian} $\J(G) := \Div^0(G)/\Prin(G)$ is the group of equivalence classes of divisors on $G$ with degree zero. The Jacobian is sometimes also referred to as the \emph{sandpile group} or the \emph{critical group}. If $D \in \Div^0(G)$, we denote the equivalence class containing $D$ by $[D]$. $\J(G)$ is always finite for a connected graph, and its order is equal to the number of spanning trees on $G$ by the matrix-tree theorem (see \cite{baknor}, or \cite{CorPerk}). 

The Jacobian of a graph is equal to the torsion subgroup of the cokernel of the graph's \emph{Laplacian}, which is an $n \times n$ integer matrix denoted by $L$ and defined as follows. Let $\Delta$ be the diagonal matrix with $(i,i)$--entry equal to $\Val(v_i)$, and let $A$ be the adjacency matrix of $G$. Then $L = \Delta - A$. $L$ gives a map from the the space of firing scripts to the space of principal divisors: if $\sigma$ is a firing script, then $L\sigma$ is the principal divisor obtained by applying $\sigma$ to the zero divisor (again, see \cite{baknor} or \cite{CorPerk}). 

For a given choice of $i \in \{ 1,...,n\}$ the \emph{reduced Laplacian} $\tilde{L}$ is $L$ with the $i$th row and $i$th column removed. $\tilde{L}$ is invertible if $G$ is connected, regardless of the choice of $i$, and furthermore we have $\det(\tilde{L}) = |\J(G)|$. 

In similar fashion, if $D \in \Div^0(G) \subset \mathbb{Z}^n$ then we define the \emph{reduced divisor} $\tilde{D} \in \mathbb{Z}^{n-1}$ to be $D$ with the $i$th entry deleted. If $D \in \Div^0(G)$, then $D$ is uniquely specified by $n-1$ of its entries, since the final entry must be the negative sum of the other $n-1$ entries. 

Likewise, since the all-ones vector generates the kernel of $L$, for any firing script $\sigma$ there is a unique firing script $\sigma_0$ such that the $i$th entry of $\sigma_0$ is zero and $L\sigma = L\sigma_0$. We define the \emph{reduced firing script} $\tilde{\sigma} \in \mathbb{Z}^{n-1}$ of $\sigma$ to be $\sigma_0$ with the $i$th entry deleted. Unless specified otherwise, we will always let $i=n$ be the index deleted when referring to a reduced Laplacian, divisor, or firing script. Note that $ \widetilde{L\sigma} = \tilde{L}\tilde{\sigma}$ as expected.
 
\begin{notation} \label{notat}
Throughout this paper, we will let $m$ denote $|\J(G)|$. There is a bijection $\phi$ between the degree zero divisors and the reduced divisors given by $\phi(D) = \tilde{D}$ and a bijection $\rho$ between the firing scripts mod the all-ones vector and the reduced firing scripts given by $\rho(\sigma) = \tilde{\sigma}$.
\end{notation}

\subsection{Monodromy weights}
In \cite{Shok}, Shokrieh considers a symmetric, bilinear map from $\J(G)\times\J(G)$ to $\mathbb{Q}/\mathbb{Z}$ known as the monodromy pairing. To prove our main results, we will utilize a related map from $\J(G)$ to $\ZmZ$ that we will call monodromy weights. The relationship between monodromy weights and Shokrieh's monodromy pairing will be made explicit in section \ref{MonPair}.

Any homomorphism $\phi: \text{Div}^0(G) \to \ZmZ$ has to take the form of a dot product, thus for $D \in \Div^0(G)$ we can write $\phi(D) = w \cdot D \pmod{m}$, where $w \in \mathbb{Z}^n$. The entries of $w$ correspond to an assignment of integer weights to vertices of $G$. For any given homomorphism $\phi$, there are many possible weight vectors $w$ representing it: we can add any multiple of $m$ to the weight on a given vertex without changing $\phi$, and we can also add a constant vector to $w$ without changing $\phi$.

While any arbitrary weight vector $w \in \mathbb{Z}^n$ represents a homomorphism $\phi : \text{Div}^0(G) \to \ZmZ$, this homomorphism descends to a well-defined map from $\J(G)$ to $\ZmZ$ if and only if $w \cdot D \equiv 0 \pmod{m}$ for all $D \in \Prin(G)$. We call such a weight vector a \emph{monodromy weight} on $G$. The following proposition provides a method of finding monodromy weights.

\begin{proposition}\label{P2.1}
Given a graph $G$ with Laplacian $L$, a vector $w$ satisfies $Lw \equiv \boldmath{0} \pmod{m}$ if and only if it is a monodromy weight.
\end{proposition}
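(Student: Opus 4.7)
The plan is to unpack the definition of monodromy weight and then use the symmetry of the Laplacian to transfer the condition from the divisor side to the weight side. By definition, $w$ is a monodromy weight exactly when $w\cdot D \equiv 0 \pmod{m}$ for every principal divisor $D$. Since every principal divisor has the form $L\sigma$ for some firing script $\sigma \in \mathbb{Z}^n$ (and conversely every such $L\sigma$ lies in $\Prin(G)$), this is equivalent to requiring
\begin{equation*}
w \cdot L\sigma \equiv 0 \pmod{m} \quad \text{for all } \sigma \in \mathbb{Z}^n.
\end{equation*}

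Next I would use the fact that $L$ is symmetric to rewrite $w \cdot L\sigma = (Lw)\cdot \sigma$. So the condition becomes $(Lw)\cdot \sigma \equiv 0 \pmod{m}$ for every $\sigma \in \mathbb{Z}^n$.

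For the forward direction, assuming $w$ is a monodromy weight, I would specialize $\sigma$ to each standard basis vector $e_i \in \mathbb{Z}^n$; this immediately forces the $i$-th coordinate of $Lw$ to be divisible by $m$, hence $Lw \equiv 0 \pmod{m}$. For the reverse direction, if $Lw \equiv 0 \pmod{m}$ then $(Lw)\cdot \sigma$ is an integer combination of entries each divisible by $m$, so it is $\equiv 0 \pmod{m}$ for every $\sigma$; running the equivalences backward shows $w$ is a monodromy weight.

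There is no real obstacle here — the statement is essentially a bilinear-algebra observation, with the only subtlety being the careful bookkeeping that $\Prin(G) = L(\mathbb{Z}^n)$ and that the symmetry of $L$ lets us swap $w \cdot L\sigma$ with $(Lw) \cdot \sigma$. Once those are noted, the proof reduces to choosing standard basis vectors as test firing scripts.
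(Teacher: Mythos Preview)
Your proof is correct and follows essentially the same approach as the paper: both arguments rewrite $w\cdot L\sigma$ as $(Lw)\cdot\sigma$ using the symmetry of $L$, and then use the arbitrariness of $\sigma$ (you make this explicit by testing on standard basis vectors) to conclude $Lw\equiv 0\pmod m$.
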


\begin{proof}
Let $\sigma$ be a firing script and let $D_0 = L\sigma$ be the corresponding principal divisor. For $w$ to be a monodromy weight, we require that $w \cdot D_0 = w \cdot L\sigma \equiv \boldmath{0} \pmod{m}$. Since $L$ is symmetric, this is equivalent to saying $Lw\cdot \sigma \equiv \boldmath{0} \pmod{m}$. Since $\sigma$ is arbitrary, this is the same as requiring that $Lw \equiv \boldmath{0} \pmod{m}$. Thus the solutions of $Lw \equiv \boldmath{0} \pmod{m}$ are precisely the monodromy weights on $G$.
\end{proof}

Proposition \ref{P2.1} gives us an idea of how to find monodromy weights. However, actually solving the equation is not as simple, since $L$ is singular over the integers. Furthermore, infinitely many monodromy weights represent the same homomorphism from $\J(G)$ to $\ZmZ$; we would like to identify a set of representatives among the monodromy weights, one for each homomorphism. 

If $w$ is a monodromy weight representing a homomorphism $\phi$, then by adding a constant vector to $w$ we can make its $n$th entry zero without changing the homomorphism $w$ represents, obtaining a new monodromy weight $w_0$. Let $\tilde{w}$ denote the first $n-1$ entries of $w_0$; we call $\tilde{w}$ a \emph{reduced monodromy weight} representing $\phi$. The following two propositions address the first of the problems mentioned above, providing a means of solving for the monodromy weights.

\begin{proposition} \label{P2.2}
Let $w \in \mathbb{Z}^n$ be a weight vector. Then $w$ is a monodromy weight if and only if $\tilde{L}\tilde{w} \equiv \boldmath{0} \pmod{m}$.
\end{proposition}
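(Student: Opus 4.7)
The plan is to reduce to Proposition 2.1, which already characterizes monodromy weights as solutions of $Lw \equiv \mathbf{0} \pmod m$, and then relate the equation $Lw \equiv \mathbf{0} \pmod m$ to the reduced equation $\tilde{L}\tilde{w} \equiv \mathbf{0} \pmod m$. The bridge between these is the normalized vector $w_0 = w + c\mathbf{1}$, where $c \in \mathbb{Z}$ is chosen so that the $n$th entry of $w_0$ is $0$. Since the all-ones vector lies in the kernel of $L$, we have $Lw = Lw_0$, so $Lw \equiv \mathbf{0}\pmod m$ is equivalent to $Lw_0 \equiv \mathbf{0}\pmod m$. This replaces $w$ by a canonical representative whose last entry we understand.

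Next I would unpack what $Lw_0$ looks like entry-by-entry. Because the $n$th entry of $w_0$ is $0$, the last column of $L$ makes no contribution when we compute the first $n-1$ entries of $Lw_0$. Deleting the (now vacuous) $n$th row and $n$th column of $L$ therefore leaves the first $n-1$ entries of $Lw_0$ exactly equal to $\tilde L \tilde w$. In particular, if $Lw \equiv \mathbf 0\pmod m$ then $\tilde L\tilde w \equiv \mathbf 0 \pmod m$, giving one direction immediately.

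For the converse I would use that $\mathbf{1}^\top L = \mathbf{0}$ by symmetry of $L$ together with $L\mathbf{1}=\mathbf{0}$; consequently the entries of $Lw_0$ sum to zero, so the $n$th entry is $-\sum_{i=1}^{n-1}(Lw_0)_i$. If the first $n-1$ entries vanish modulo $m$, so does the $n$th. Hence $\tilde L\tilde w\equiv \mathbf 0\pmod m$ forces $Lw_0\equiv \mathbf 0\pmod m$, and thus $Lw\equiv \mathbf 0\pmod m$, which by Proposition~\ref{P2.1} means $w$ is a monodromy weight.

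There isn't really a serious obstacle here; the statement is essentially a bookkeeping translation between $L$ and its reduction. The one subtlety worth writing carefully is the symmetric role played by $\mathbf{1}\in\ker L$: once to justify that passing from $w$ to $w_0$ does not change $Lw$, and again to conclude that the $n$th entry of $Lw_0$ is determined modulo $m$ by the others. Both rely on the fact that $G$ is connected (so that $\ker L$ is spanned by $\mathbf 1$), which is already standing throughout the paper.
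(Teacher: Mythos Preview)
Your proof is correct and follows essentially the same route as the paper's: normalize $w$ to $w_0$ with last entry zero (so $Lw\equiv Lw_0$), identify the first $n-1$ entries of $Lw_0$ with $\tilde L\tilde w$, and use that the rows of $L$ sum to zero to recover the $n$th entry from the others modulo $m$. One small remark: the two facts you flag at the end ($\mathbf{1}\in\ker L$ and $\mathbf{1}^\top L=\mathbf{0}$) hold for any graph Laplacian, so connectedness is not actually needed for this particular step.
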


\begin{proof}
Observe that since $w$ and $w_0$ represent the same homomorphism, $Lw \equiv \boldmath{0} \pmod{m}$ if and only if $Lw_0 \equiv \boldmath{0} \pmod{m}$. The first $n-1$ entries of $Lw_0$ are simply $\tilde{L}\tilde{w_0}$. The last row of $L$ is the negative sum of the first $n-1$ rows by the definition of the Laplacian; hence, if the dot product of $w_0$ with each of the first $n-1$ rows of $L$ is zero modulo $m$, the dot product of $w_0$ with the $n$th row must also be zero, implying that $Lw_0 \equiv \boldmath{0} \pmod{m}$. Thus we see that $Lw_0 \equiv \boldmath{0} \pmod{m}$ if and only if $\tilde{L}\tilde{w_0} \equiv \boldmath{0} \pmod{m}$. 
\end{proof}

\begin{proposition} \label{FindWeight} Any reduced monodromy weight $\tilde{w}$ can be found by solving the equation $\tilde{L}\tilde{w} = m\tilde{D}$ for $\tilde{w}$ over the integers for some $D\in\Div^{0}(G)$.
\end{proposition}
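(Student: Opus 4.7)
The plan is to combine Proposition \ref{P2.2} with the bijection $\phi : \Div^0(G) \to \mathbb{Z}^{n-1}$ from Notation \ref{notat}. First, by Proposition \ref{P2.2}, a vector $\tilde{w} \in \mathbb{Z}^{n-1}$ is a reduced monodromy weight if and only if $\tilde{L}\tilde{w} \equiv \boldsymbol{0} \pmod{m}$. Unpacking this congruence, it is equivalent to the existence of some vector $v \in \mathbb{Z}^{n-1}$ for which $\tilde{L}\tilde{w} = mv$ holds as an equation over the integers.

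Next I would invoke the bijection $\phi$: since every vector in $\mathbb{Z}^{n-1}$ arises as $\tilde{D}$ for a unique $D \in \Div^0(G)$ (namely, the divisor whose first $n-1$ entries are $v$ and whose $n$th entry is $-\sum_{i=1}^{n-1} v_i$), there exists $D \in \Div^0(G)$ with $\tilde{D} = v$. Substituting gives the desired integer equation $\tilde{L}\tilde{w} = m\tilde{D}$. Conversely, if $\tilde{w}$ satisfies $\tilde{L}\tilde{w} = m\tilde{D}$ over $\mathbb{Z}$ for some $D \in \Div^0(G)$, then reducing modulo $m$ yields $\tilde{L}\tilde{w} \equiv \boldsymbol{0} \pmod{m}$, so $\tilde{w}$ is a reduced monodromy weight by Proposition \ref{P2.2}.

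There is no real obstacle here: the proposition is essentially a restatement of Proposition \ref{P2.2} after recognizing that a vector is divisible by $m$ if and only if it is $m$ times an integer vector, and that every integer vector of length $n-1$ is the reduction of a (unique) degree zero divisor. The only thing worth being careful about is keeping the distinction between the two directions of this equivalence, and ensuring that $D$ is constructed to lie in $\Div^0(G)$ rather than in $\Div(G)$ more generally.
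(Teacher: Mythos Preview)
Your proof is correct and relies on the same key ingredient as the paper, namely Proposition \ref{P2.2}. The paper's own argument is terser and emphasizes the converse direction: it fixes $D$, writes down the unique rational solution $\tilde{w}=m\tilde{L}^{-1}\tilde{D}$, and then invokes Proposition \ref{P2.2} to check that this $\tilde{w}$ is indeed a reduced monodromy weight. Your write-up instead starts from an arbitrary reduced monodromy weight and produces the corresponding $D$, which is in fact the direction the statement literally asserts; in that sense your version is slightly more faithful to the wording, while the paper's version makes the existence and uniqueness of the solution for a given $D$ explicit. The two arguments are entirely compatible and together give the full biconditional.
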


\begin{proof}
First note that $\tilde{L}$ is invertible over the rationals, so there is a unique solution $\tilde{w} = m\tilde{L}^{-1}\tilde{D}$ for $\tilde{w}$. We have $\tilde{L}\tilde{w} = \tilde{L} m \tilde{L}^{-1} \tilde{D} = m \tilde{D} \equiv \boldmath{0} \pmod{m}$, so by Proposition \ref{P2.2} $\tilde{w}$ is a reduced monodromy weight.
\end{proof}

Proposition \ref{FindWeight} gives us a concrete way of finding reduced monodromy weights. We now address the second problem mentioned earlier by showing that if we consider reduced monodromy weights modulo m, they correspond bijectively with the elements of $\J(G)$.

\begin{proposition}\label{LEquiv}
Let $D_1, D_2 \in \Div^0(G)$ and let $\tilde{w_1},\tilde{w_2}$ be derived from $D_1, D_2$ as described in Proposition \ref{FindWeight}. Then $\tilde{w_1} \equiv \tilde{w_2} \pmod{m}$ if and only if $D_1 \sim D_2$.
\end{proposition}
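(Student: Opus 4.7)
The plan is to pass everything through the formula $\tilde{w}_i = m\tilde{L}^{-1}\tilde{D}_i$ from Proposition~\ref{FindWeight} and reduce the claim to a statement about whether $\tilde{D}_1 - \tilde{D}_2$ lies in the image of $\tilde{L}$ acting on $\mathbb{Z}^{n-1}$.

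First, I would record that $m\tilde{L}^{-1}$ is an integer matrix. This follows from Cramer's rule together with $\det(\tilde{L}) = m$: we have $\tilde{L}^{-1} = \frac{1}{m}\operatorname{adj}(\tilde{L})$, so $m\tilde{L}^{-1} = \operatorname{adj}(\tilde{L}) \in M_{n-1}(\mathbb{Z})$. In particular $\tilde{w}_i = m\tilde{L}^{-1}\tilde{D}_i$ is an integer vector, consistent with Proposition~\ref{FindWeight}, and
\[
\tilde{w}_1 - \tilde{w}_2 \;=\; m\tilde{L}^{-1}(\tilde{D}_1 - \tilde{D}_2).
\]

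Next I would translate the two sides of the desired equivalence. The condition $\tilde{w}_1 \equiv \tilde{w}_2 \pmod{m}$ says that $\tilde{w}_1 - \tilde{w}_2 = m\tilde{v}$ for some $\tilde{v} \in \mathbb{Z}^{n-1}$; using the displayed identity and cancelling $m$ (valid because we are now working over $\mathbb{Q}$), this is equivalent to $\tilde{D}_1 - \tilde{D}_2 = \tilde{L}\tilde{v}$ for some $\tilde{v} \in \mathbb{Z}^{n-1}$. On the other side, $D_1 \sim D_2$ means $D_1 - D_2 = L\sigma$ for some firing script $\sigma \in \mathbb{Z}^n$.

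The step that needs a little care is linking $\tilde{L}\tilde{v} = \tilde{D}_1 - \tilde{D}_2$ back to $L\sigma = D_1 - D_2$. For the forward direction, given $\tilde{v}$, let $\sigma \in \mathbb{Z}^n$ be the extension by $0$ in the $n$-th coordinate; then $L\sigma$ and $D_1 - D_2$ agree in their first $n-1$ coordinates, and both have degree $0$, so they agree in the $n$-th coordinate too. For the reverse direction, given $\sigma$ with $L\sigma = D_1 - D_2$, restrict to the first $n-1$ coordinates to obtain $\tilde{L}\tilde{\sigma} = \tilde{D}_1 - \tilde{D}_2$ (using $\widetilde{L\sigma} = \tilde{L}\tilde{\sigma}$ as noted in the paper). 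Chaining these equivalences yields the proposition.

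The only real obstacle is the integrality bookkeeping; once it is observed that $m\tilde{L}^{-1}$ has integer entries so that cancelling $m$ does not introduce spurious denominators, the rest is a direct manipulation using that the $n$-th coordinate of a degree-zero divisor is determined by the other $n-1$.
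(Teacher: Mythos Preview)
Your proof is correct and follows essentially the same route as the paper: both arguments pass through the identity $\tilde{w}_1 - \tilde{w}_2 = m\tilde{L}^{-1}(\tilde{D}_1 - \tilde{D}_2)$ and reduce the question to whether $\tilde{D}_1 - \tilde{D}_2$ lies in the image of $\tilde{L}$ on $\mathbb{Z}^{n-1}$. You are somewhat more explicit than the paper about why $m\tilde{L}^{-1}$ has integer entries and about the passage between the reduced and unreduced pictures, but the substance of the argument is the same.
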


\begin{proof}
If $D_1 \sim D_2$, then $\tilde{D}_1 = \tilde{L}\tilde{\sigma} + \tilde{D}_2$ for some reduced firing script $\tilde{\sigma}$.  It is easy to check that $\tilde{w}_1 = \tilde{w}_2 + m\tilde{\sigma}$, which gives $\tilde{w_1} \equiv \tilde{w_2} \pmod{m}$. Conversely, if $\tilde{w_1} \equiv \tilde{w_2} \pmod{m}$ then $\tilde{w}_1 = \tilde{w}_2 + mv$ for some $v \in \mathbb{Z}^{n-1}$. By the derivation of $\tilde{w_1}$ and $\tilde{w_2}$, this means that $m\tilde{L}^{-1}(\tilde{D}_1 - \tilde{D}_2) = mv$, so we have $\tilde{D}_1 - \tilde{D}_2 = \tilde{L}v$, which implies that $D_1 \sim D_2$.
\end{proof}

Let $\Hom(\J(G),\ZmZ)$ be the group of homomorphisms $\phi: \J(G) \to \ZmZ$, let $K$ be the group of reduced monodromy weights taken modulo $m$, and let $C^{\tilde{L}}=m\tilde{L}^{-1}$ be the cofactor matrix of the reduced Laplacian. Propositions \ref{FindWeight} and \ref{LEquiv} tell us that there is an isomorphsim $F : \J(G) \to K$ given by $[D]\mapsto C^{\tilde{L}}D\pmod{m}$, which suggests that $\Hom(\J(G),\ZmZ)$ may also be isomorphic to $\J(G)$. This is indeed the case, and is in fact a manifestation of a far more general and well-known result: any finite abelian group is (non-canonically) isomorphic to its Pontryagin dual. Given $\tilde{w}\in K$, let $\phi_{\tilde{w}}\in \Hom(\J(G),\ZmZ)$ be the associated homomorphism mapping $[D]\mapsto \tilde{w}\cdot \tilde{D}\pmod{m}$; let $\Phi:K\to \Hom(\J(G),\ZmZ)$ be the map $\tilde{w}\mapsto\phi_{\tilde{w}}$. The earlier discussion shows that $\Phi$ is surjective.

\begin{proposition} \label{CanIsom}
The composition $\Phi\circ F:\J(G)\to K\to\Hom(\J(G),\ZmZ)$ is a canonical isomorphism between $\J(G)$ and $\Hom(\J(G),\ZmZ)$.
\end{proposition}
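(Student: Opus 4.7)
The plan is to verify that $\Phi$ is an isomorphism---after which Proposition \ref{LEquiv} already gives the same for $F$, so $\Phi\circ F$ is automatically an isomorphism of groups---and then to recognize that the composition has an intrinsic description independent of the auxiliary choice of reduction vertex.

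First I would check that $\Phi$ is a group homomorphism, which is immediate from bilinearity of the dot product: $\phi_{\tilde{w}_1+\tilde{w}_2}([D]) = (\tilde{w}_1+\tilde{w}_2)\cdot\tilde{D} = \phi_{\tilde{w}_1}([D])+\phi_{\tilde{w}_2}([D])$ in $\ZmZ$. Since the discussion preceding the proposition has already established surjectivity of $\Phi$, and since $|K|=|\J(G)|$ by Propositions \ref{FindWeight} and \ref{LEquiv} while $|\Hom(\J(G),\ZmZ)|=|\J(G)|$ (any finite abelian group of exponent dividing $m$ has the same order as its $\ZmZ$-dual, via the structure theorem), a surjection between finite groups of equal order is automatically an isomorphism. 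As a more direct alternative, I would observe that $\phi_{\tilde{w}} = 0$ forces $\tilde{w}\cdot\tilde{D}\equiv 0\pmod m$ for every $D\in\Div^0(G)$; plugging in $D=\delta_{vn}$ for each non-base vertex $v$ forces every coordinate of $\tilde{w}$ to vanish modulo $m$, so $\tilde{w}=0$ in $K$.

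For canonicity I would unfold the composition. By construction, $(\Phi\circ F)([D])$ sends $[D']$ to $(C^{\tilde{L}}\tilde{D})\cdot\tilde{D}' = m\,(\tilde{L}^{-1}\tilde{D})\cdot\tilde{D}'\pmod m$. Up to the identification of $\frac{1}{m}\mathbb{Z}/\mathbb{Z}\subset\mathbb{Q}/\mathbb{Z}$ with $\ZmZ$ via multiplication by $m$, this is nothing other than the Shokrieh monodromy pairing $\langle [D],[D']\rangle$ to be described in Section \ref{MonPair}. As that pairing is intrinsic to $\J(G)$---in particular independent of which vertex was deleted to form the reduced Laplacian---the composition $\Phi\circ F$ inherits canonicity.

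The main obstacle is really this last point. Individually, both $F$ and $\Phi$ depend on the choice of deleted vertex, so calling the composition canonical requires genuinely proving that $(\tilde{L}^{-1}\tilde{D})\cdot\tilde{D}'\pmod 1$ is unchanged when a different vertex is deleted. This is a cofactor identity on $L$ that can be checked by direct linear algebra, or deferred to the well-definedness established in Shokrieh's treatment of the monodromy pairing.
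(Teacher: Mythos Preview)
Your proposal is correct and, at its core, matches the paper's argument: the paper's proof consists solely of the injectivity check for $\Phi$, observing that $(\tilde{w}_1-\tilde{w}_2)\cdot\tilde{D}\equiv 0\pmod m$ for all $\tilde{D}$ forces $\tilde{w}_1\equiv\tilde{w}_2$. Your ``direct alternative'' is exactly this, made explicit by plugging in the $\delta_{vn}$.

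Where you differ is in thoroughness. You offer an additional counting route via $|K|=|\J(G)|=|\Hom(\J(G),\ZmZ)|$, which the paper does not use. More notably, you take the word \emph{canonical} seriously and argue independence from the choice of deleted vertex by identifying $\Phi\circ F$ with the monodromy pairing; the paper's proof, by contrast, does not address canonicity at all beyond asserting it in the statement, and only later (Section~\ref{MonPair}) relates the construction to Shokrieh's pairing. Your treatment is therefore more complete on this point, and you correctly flag that the independence-of-base-vertex claim is the nontrivial content hiding behind the word ``canonical.''
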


\begin{proof}
It remains only to show that $\Phi$ is injective. Suppose that $\phi_{\tilde{w}_1}=\phi_{\tilde{w}_2}$. Then $(\tilde{w}_1 - \tilde{w}_2)\cdot \tilde{D} \equiv 0 \pmod{m}$ for all $\tilde{D} \in \mathbb{Z}^{n-1}$. Since $\tilde{D}$ can be chosen arbitrarily, this implies that $\tilde{w}_1 - \tilde{w}_2 \equiv 0 \pmod{m}$.
\end{proof}


Note that the uniqueness of the reduced monodromy weight in $K$ representing a given homomorphism $\phi \in \Hom(\J(G),\ZmZ)$ implies that any two monodromy weights represent the same homomorphism if and only if their difference taken modulo $m$ is a constant vector. Proposition \ref{CanIsom} can also be used to obtain information about the cardinality of a minimum generating set for $\J(G)$, which we will denote as $\rk(\J(G))$. Let $\rk_m(A)$ denote the rank of $A$ over $\ZmZ$ and let $\nul_m(A)$ denote the nullity of $A$ over $\ZmZ$. Then we can derive the following corollary.

\begin{corollary} \label{rankcor}
Over $\ZmZ$, we have $\nul_m(\tilde{L}) = \rk_m(C^{\tilde{L}}) = \rk(\J(G))$.
\end{corollary}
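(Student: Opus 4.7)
The plan is to recognize that the group $K$ of reduced monodromy weights modulo $m$ admits three descriptions drawn from the three preceding propositions, and that these descriptions immediately yield the desired equalities once we agree that $\rk_m$ and $\nul_m$ of a matrix over $\ZmZ$ denote the minimum number of generators of its image and kernel submodules respectively.

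First I would unpack what $K$ is in each proposition. By Proposition~\ref{P2.2}, a vector $\tilde{w}\in(\ZmZ)^{n-1}$ is a reduced monodromy weight modulo $m$ exactly when $\tilde{L}\tilde{w}\equiv\boldsymbol{0}\pmod{m}$; thus
\begin{equation*}
K \;=\; \ker\!\bigl(\tilde{L}\colon(\ZmZ)^{n-1}\to(\ZmZ)^{n-1}\bigr),
\end{equation*}
giving $\rk(K)=\nul_m(\tilde{L})$. Next, Proposition~\ref{FindWeight} says that every reduced monodromy weight has the form $C^{\tilde{L}}\tilde{D}$ for some $D\in\Div^0(G)$, and since any $\tilde{D}\in\mathbb{Z}^{n-1}$ arises as the reduction of a degree-zero divisor (just set the $n$th coordinate to cancel the sum), every element of $K$ is the image of some $\tilde{D}$ under $C^{\tilde{L}}\bmod m$. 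Conversely, any $C^{\tilde{L}}\tilde{D}$ satisfies $\tilde{L}C^{\tilde{L}}\tilde{D}=m\tilde{D}\equiv\boldsymbol{0}\pmod{m}$ and so lies in $K$. Hence
\begin{equation*}
K \;=\; \Ima\!\bigl(C^{\tilde{L}}\colon(\ZmZ)^{n-1}\to(\ZmZ)^{n-1}\bigr),
\end{equation*}
giving $\rk(K)=\rk_m(C^{\tilde{L}})$. Finally, Proposition~\ref{CanIsom} supplies an isomorphism $F\colon\J(G)\to K$, and since isomorphic finite abelian groups have the same minimum number of generators, $\rk(K)=\rk(\J(G))$. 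Chaining the three identities yields $\nul_m(\tilde{L})=\rk_m(C^{\tilde{L}})=\rk(\J(G))$.

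The only real obstacle is conceptual rather than computational: one must be careful that ``rank'' and ``nullity'' over the non-PID ring $\ZmZ$ are interpreted consistently as minimum numbers of generators of the relevant submodules of $(\ZmZ)^{n-1}$, since it is exactly this invariant that matches $\rk(\J(G))$. Once that convention is fixed, each of the three characterizations of $K$ is already contained in the propositions established above, so the proof is essentially a bookkeeping assembly with no additional calculation required.
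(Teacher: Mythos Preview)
Your proof is correct and follows essentially the same route as the paper: both arguments establish $\ker\tilde{L}=\Ima C^{\tilde{L}}$ over $\ZmZ$ and invoke the isomorphism $F\colon\J(G)\to K$, the only cosmetic difference being that the paper re-verifies the inclusion $\Ima C^{\tilde{L}}\subseteq\ker\tilde{L}$ by a direct one-line computation rather than citing the already-known identification $K=\Ima C^{\tilde{L}}$. One minor attribution point: the isomorphism $F\colon\J(G)\to K$ is actually established in the discussion immediately preceding Proposition~\ref{CanIsom} (via Propositions~\ref{FindWeight} and~\ref{LEquiv}), not by Proposition~\ref{CanIsom} itself.
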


\begin{proof}
Proposition \ref{CanIsom} implies that $\rk_m(C^{\tilde{L}}) = \rk(\J(G))$. If $\tilde{u} \in \ker\tilde{L}$, then by Proposition \ref{P2.2} $\tilde{u} \in K$, so by Proposition \ref{CanIsom} $\tilde{u} \in \Ima C^{\tilde{L}}$. Conversely, if $\tilde{u} \in \Ima C^{\tilde{L}}$, then over $\mathbb{Z}^{n-1}$ we have $\tilde{u} + m\tilde{v} = m\tilde{L}^{-1}\tilde{w}$ for some $\tilde{v},\tilde{w} \in \mathbb{Z}^{n-1}$. Thus $\tilde{L}\tilde{u} = m(\tilde{w} - \tilde{L}\tilde{v}) = 0 \pmod{m}$, so $\tilde{u} \in \ker\tilde{L}$. Consequently, we have $\ker\tilde{L} = \Ima C^{\tilde{L}}$, which implies that $\nul_m(\tilde{L}) = \rk_m(C^{\tilde{L}})$.
\end{proof}

\begin{example}
Let $G$ be the cycle graph on six vertices. Note that $\J(G) \cong \mathbb{Z}/6\mathbb{Z}$, so $m=6$. $\tilde{L}$ and $C^{\tilde{L}}$ are the following over $\mathbb{Z}/6\mathbb{Z}$:
\[
\tilde{L} = 
\begin{bmatrix}
2 & -1 & 0 & 0 & 0 \\
-1 & 2 & -1 & 0 & 0 \\
0 & -1 & 2 & -1 & 0 \\
0 & 0 & -1 & 2 & -1 \\
0 & 0 & 0 & -1 & 2
\end{bmatrix}
\qquad
C^{\tilde{L}} = 
\begin{bmatrix}
5 & 4 & 3 & 2 & 1 \\
4 & 2 & 0 & 4 & 2 \\
3 & 0 & 3 & 0 & 3 \\
2 & 4 & 0 & 2 & 4 \\
1 & 2 & 3 & 4 & 5
\end{bmatrix}
\]
Let $\text{col}_i$ denote the $ith$ column of $C^{\tilde{L}}$. Observe that modulo 6, we have $\text{col}_i = i \cdot\text{col}_1$, which implies that $\rk_m(C^{\tilde{L}}) = 1$. Since $\J(G)$ is cyclic, it has a generating set of cardinality 1, so likewise $\rk(\J(G)) = 1$. Finally, it can easily be verified that the kernel of $\tilde{L}$ over $\mathbb{Z}/6\mathbb{Z}$ is generated by the vector $v = (1,2,3,4,5)$, so $\nul_m(\tilde{L}) = 1$ as well.
\end{example}

\subsection{Relationship to the monodromy pairing} \label{MonPair}

We now make the relationship between monodromy weights and the monodromy pairing explicit. Let $M$ be any generalized inverse of the Laplacian matrix. Shokrieh's pairing is given by $\langle \cdotp , \cdotp \rangle :\J(G) \times \J(G) \to \mathbb{Q}/\mathbb{Z}$, evaluated as follows:
\[
\langle D_1 , D_2 \rangle = D_1^T M D_2.
\]
Using the isomorphism $\Phi\circ F:\J(G)\to \Hom(\J(G),\ZmZ)$ we can define a map $\varphi : \J(G) \times \J(G) \to \ZmZ$ by $([D_1],[D_2])\mapsto \Phi\circ F([D_1])([D_2])$. The map can be computed as follows. Let $D_1, D_2$ be representatives of elements of $\J(G)$. Then $\varphi([D_1],[D_2])$ is given by
\[
\rd{D}_1^T C^{\tilde{L}} \rd{D}_2 \pmod{m}.
\]
In this computation, the reduction modulo $m$ of $C^{\tilde{L}}\rd{D}_2$ is $F([D_2])$, and multiplying by $\rd{D}_1^T$, i.e.\ taking the dot product with $\rd{D}_1$, is then just the application of the corresponding homomorphism to $[D_1]$.

This map is, in essence, the same as the map defined by Shokrieh, differing only by a multiplicative factor of $m$ and the use of any generalized inverse in the monodromy pairing instead of $\tilde{L}^{-1}$. Since the matrix obtained from $\tilde{L}^{-1}$ by making the $n$th row and columns all $0$ is a generalized inverse of $L$, we see that the two pairings are really the same. 

\section{Edge deletion and insertion} \label{sec:maintheorem}

In this section, we provide the proofs of the two main theorems from the introduction. First we provide a lemma relating the order of a divisor to the monodromy weights.

\begin{lemma}\label{GTwsur}
Let $D \in \Div^0(G)$ be a divisor, $\rd{w} = C^{\tilde{L}} \rd{D} \in \mathbb{Z}^{n-1}$ with each component taking a value at least 0 and less than $ m$.
Let $\phi: \Div^0(G) \to \ZmZ$ be the map induced by inner product with $\rd{w}$. Then
\begin{align*}
|[D]|_{\J(G)} = |\text{Im}(\phi)| = m/\gcd(m, \rd{w})
\end{align*}
where $\gcd(m,\rd{w})$ denotes the gcd of $m$ and the entries of the vector $\rd{w}$, and $|\cdot|_{\J(G)}$ gives the order of an element of $ \J(G)$.
\end{lemma}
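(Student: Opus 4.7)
The plan is to prove the two equalities separately: the right equality $|\text{Im}(\phi)| = m/\gcd(m,\rd{w})$ is a direct computation of a cyclic subgroup of $\ZmZ$, and the left equality $|[D]|_{\J(G)} = |\text{Im}(\phi)|$ follows from Proposition~\ref{CanIsom} together with the standard observation that the order of a homomorphism into a cyclic group equals the order of its image.

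First I would compute $\text{Im}(\phi)$. By Notation~\ref{notat}, the reduced divisor map $D \mapsto \rd{D}$ is a bijection $\Div^0(G)\to\mathbb{Z}^{n-1}$, and by definition $\phi(D) = \rd{w}\cdot\rd{D} \pmod{m}$. So as $D$ ranges over $\Div^0(G)$, the value $\phi(D)$ ranges over all integer linear combinations of the entries $\rd{w}_1,\ldots,\rd{w}_{n-1}$ reduced modulo $m$. This image is the cyclic subgroup of $\ZmZ$ generated by $\gcd(\rd{w}_1,\ldots,\rd{w}_{n-1})$, which coincides with $\langle \gcd(m,\rd{w})\rangle$ and has order $m/\gcd(m,\rd{w})$.

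Next I would relate $|\text{Im}(\phi)|$ to $|[D]|_{\J(G)}$. Since $\rd{w} = C^{\tilde{L}}\rd{D}$, Proposition~\ref{FindWeight} guarantees $\rd{w}$ is a reduced monodromy weight, so $\phi$ descends to a well-defined homomorphism $\bar{\phi}:\J(G)\to\ZmZ$ with $\text{Im}(\bar\phi)=\text{Im}(\phi)$. Unwinding the definitions, $\bar\phi$ is exactly $(\Phi\circ F)([D])$, so by Proposition~\ref{CanIsom} the order of $\bar\phi$ in $\Hom(\J(G),\ZmZ)$ equals $|[D]|_{\J(G)}$. Finally, because $\ZmZ$ is cyclic, $\text{Im}(\bar\phi)$ is cyclic as well, and its exponent, which is the order of $\bar\phi$ as a group element, equals $|\text{Im}(\bar\phi)|$. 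Chaining these identifications yields $|[D]|_{\J(G)} = |\text{Im}(\phi)| = m/\gcd(m,\rd{w})$.

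The argument is largely bookkeeping rather than a novel deduction. The only subtle points are (i) that identifying $\text{Im}(\phi)$ with the subgroup generated by the entries of $\rd{w}$ requires that reduced divisors range over all of $\mathbb{Z}^{n-1}$, and (ii) that invoking Proposition~\ref{CanIsom} requires first checking $\phi$ descends to $\J(G)$ and that the descended $\bar\phi$ is indeed the image of $[D]$ under $\Phi\circ F$; both are immediate from the setup.
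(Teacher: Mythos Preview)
Your proof is correct, but it takes a different route from the paper's. The paper argues directly from the definition of order: writing $k=|[D]|_{\J(G)}$, one has $\tilde{L}\tilde{\sigma}=k\tilde{D}$ for some integer firing script $\tilde{\sigma}$ with $\gcd(k,\tilde{\sigma})=1$, so $\tilde{w}=m\tilde{L}^{-1}\tilde{D}=(m/k)\tilde{\sigma}$ and hence $\gcd(m,\tilde{w})=m/k$. This is a bare-hands computation that never invokes Proposition~\ref{CanIsom}. Your argument, by contrast, treats Proposition~\ref{CanIsom} as a black box: since $\Phi\circ F$ is an isomorphism, $|[D]|_{\J(G)}$ equals the order of $\bar\phi=(\Phi\circ F)([D])$ in $\Hom(\J(G),\ZmZ)$, and because $\ZmZ$ is cyclic this order is $|\mathrm{Im}(\bar\phi)|$. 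There is no circularity, since Proposition~\ref{CanIsom} is established in Section~2 without reference to this lemma. The paper's approach is more elementary and self-contained, and it makes the relationship $\tilde{w}=(m/k)\tilde{\sigma}$ explicit (which feeds directly into Corollary~\ref{orderCor}); your approach is shorter and more conceptual, making transparent that the lemma is really a statement about Pontryagin duality.
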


\begin{proof} 
The order of $[D]$ is the smallest integer $k$ such that the divisor $kD$ is linearly equivalent to the zero divisor $\textbf{0}$. Thus we have $ kD - L\sigma = \textbf{0}$ for some firing script $\sigma$. Thus $\tilde{L}\rd{\sigma} = k\rd{D}$, and multiplying both sides by $\tilde{L}^{-1}$ gives $k\tilde{L}^{-1}\rd{D} = \rd{\sigma}$. Since $\sigma$ is a firing script, it must be an integer vector, thus $k$ is the smallest integer such that $k\tilde{L}^{-1}\rd{D}$ is an integer vector.

Then since $k$ is minimal, the gcd of the entries of $\rd{\sigma}$ and $k$ is $1$. Since $\rd{w} = C^{\tilde{L}} \rd{D}$, we can write $\rd{w} = (m/k)\rd{\sigma}$, and thus $\gcd(m,\rd{w}) = m/k$. So, $\text{Im}(\phi)$ is a subgroup of $\ZmZ$ generated by $(m/k)$ since we can generate any multiple of $ m/k$ via inner product with $ w$ by the Euclidean algorithm. Therefore, $|\text{Im}(\phi)| = m/(m/k) = k = m/\gcd(m,\rd{w})$.
\end{proof}

The following statement is implied.

\begin{corollary}\label{orderCor}
Let $D \in \Div^0(G)$, then
\begin{align*}
|[D]|_{\J(G)} = \det{\tilde{L}}/\gcd(C^{\tilde{L}}\rd{D}, \det{\tilde{L}})
\end{align*}
where $\gcd(C^{\tilde{L}}\rd{D}, \det{\tilde{L}})$ represents the greatest common divisor of the components of the vector $C^{\tilde{L}}\rd{D}$ and $\det{\tilde{L}}$.
\end{corollary}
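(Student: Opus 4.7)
The plan is to derive this statement directly from Lemma \ref{GTwsur}, essentially by removing the requirement that $\rd{w}$ be componentwise reduced into the range $[0,m)$. First I would recall, as noted in Section \ref{sec:chipfiring} via the matrix-tree theorem, that $m = |\J(G)| = \det \tilde{L}$, so replacing $m$ by $\det\tilde{L}$ in the denominator is simply a change of notation.

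The only real content lies in reconciling the two versions of $\rd{w}$. Lemma \ref{GTwsur} defines $\rd{w}$ as $C^{\tilde{L}}\rd{D}$ with each entry reduced into $[0,m)$, whereas the corollary forms the gcd of the \emph{unreduced} vector $C^{\tilde{L}}\rd{D}$ together with $\det \tilde{L}$. This discrepancy is harmless: for any integers $a$ and $k$ we have $\gcd(a,m) = \gcd(a+km,m)$, and this extends componentwise to tuples, so
\[
\gcd\!\bigl(C^{\tilde{L}}\rd{D},\, \det \tilde{L}\bigr) = \gcd(\rd{w},\, m),
\]
where the right-hand side is precisely the quantity appearing in the conclusion of Lemma \ref{GTwsur}. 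Substituting into that lemma gives
\[
|[D]|_{\J(G)} \;=\; \frac{m}{\gcd(m,\rd{w})} \;=\; \frac{\det\tilde{L}}{\gcd(C^{\tilde{L}}\rd{D},\,\det\tilde{L})},
\]
which is the claim.

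There is essentially no obstacle here; the corollary is an immediate reformulation of Lemma \ref{GTwsur}. The only point to state carefully is that componentwise reduction modulo $m$ preserves the gcd with $m$, which is what allows one to drop the reduction hypothesis on $\rd{w}$ and write the result purely in terms of $\tilde{L}$ and $\rd{D}$.
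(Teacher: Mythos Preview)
Your proof is correct and matches the paper's approach: the paper simply states that the corollary ``is implied'' by Lemma~\ref{GTwsur}, and you have spelled out exactly why---namely that $m=\det\tilde{L}$ and that reducing the entries of $C^{\tilde{L}}\rd{D}$ modulo $m$ does not change their gcd with $m$.
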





Before proving our first main theorem, we offer one last lemma.

\begin{lemma}\label{Cyylem}
Let $ G $ be a multigraph with an edge between vertices $ x $ and $ y$, and let $ \tilde{L} $ be the Laplacian of $ G $ reduced by the vertex $ x$. Then $ C^{\tilde{L}}_{yy} $ is the number of spanning trees in $ G $ that include the edge $ (x,y)$. 
\end{lemma}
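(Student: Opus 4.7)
The plan is to unpack $C^{\tilde{L}}_{yy}$ as a suitable minor of $L$, apply a variant of the matrix-tree theorem, and then set up a bijection with the spanning trees of interest.

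First I would observe that $C^{\tilde{L}} = m\tilde{L}^{-1}$ is the adjugate of $\tilde{L}$ (using that $\det\tilde{L}=m$ and $\tilde{L}$ is symmetric). Hence $C^{\tilde{L}}_{yy}$ is the $(y,y)$-cofactor of $\tilde{L}$. Since the sign $(-1)^{y+y}$ is positive and $\tilde{L}$ is itself obtained from $L$ by deleting the row and column indexed by $x$, this cofactor equals $\det(L_{\{x,y\}})$, where $L_{\{x,y\}}$ denotes $L$ with both the rows and columns indexed by $x$ and $y$ removed.

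Next I would invoke the multi-vertex (all-minors) matrix-tree theorem, which states that for any subset $S\subseteq V(G)$, the determinant of $L$ with rows and columns indexed by $S$ removed equals the number of spanning forests $F$ of $G$ with exactly $|S|$ components, each containing exactly one vertex of $S$. Applying this with $S=\{x,y\}$ gives that $\det(L_{\{x,y\}})$ equals the number of spanning forests of $G$ with two connected components, one containing $x$ and the other containing $y$. Call these the \emph{$(x,y)$-separating spanning forests} of $G$.

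It then remains to exhibit a bijection between the $(x,y)$-separating spanning forests of $G$ and the spanning trees of $G$ that contain the given edge $e=(x,y)$. In one direction, given such a forest $F$, adding $e$ joins the two components without creating a cycle (as $x$ and $y$ lie in distinct components of $F$), yielding a spanning tree of $G$ that contains $e$. In the other direction, given a spanning tree $T$ of $G$ containing $e$, removing $e$ disconnects $T$ into exactly two subtrees, one containing $x$ and the other containing $y$, which is precisely an $(x,y)$-separating spanning forest. These maps are clearly mutual inverses, and combining this bijection with the identity from the previous step proves the lemma.

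The only mildly technical step is the invocation of the multi-vertex matrix-tree theorem, but since this is standard (e.g.\ a special case of Chaiken's all-minors matrix-tree theorem), the rest of the argument is essentially bookkeeping and does not present a serious obstacle.
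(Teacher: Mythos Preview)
Your proof is correct but takes a genuinely different route from the paper. The paper instead forms the graph $G_1$ obtained by deleting the specified edge $(x,y)$, observes that $\tilde{L}$ and $\tilde{L}_1$ differ only in the $(y,y)$ entry (by~$1$), and expands $\det\tilde{L}_1$ along the $y$-th column to obtain
\[
\det\tilde{L}_1 \;=\; \det\tilde{L} - C^{\tilde{L}}_{yy}.
\]
Applying the ordinary matrix-tree theorem to $G$ and to $G_1$ then identifies $C^{\tilde{L}}_{yy}$ as the number of spanning trees of $G$ containing that edge.

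Your argument is conceptually cleaner and makes the combinatorial meaning of $C^{\tilde{L}}_{yy}$ transparent via the forest--tree bijection, at the cost of importing the all-minors matrix-tree theorem as a black box. The paper's argument stays entirely within the single-minor matrix-tree theorem and an elementary cofactor expansion, which is more self-contained; it also produces the identity $\det\tilde{L} = \det\tilde{L}_1 + C^{\tilde{L}}_{yy}$ directly, and this identity is reused verbatim in the proofs of Theorems~\ref{T:GOQ2.8} and~\ref{GCOQ}.
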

\begin{proof}
Let $ G_1 $ the graph obtained by deleting an edge between the vertices $ x $ and $ y$. Let $ x $ be the deleted row and column in our definition of the reduced Laplacian of both $ G $ denoted $ \tilde{L}$ and $ G_1$ denoted $\tilde{L}_1$. Note that the only difference between the two matrices is that $ (\tilde{L}_1)_{yy} = \tilde{L}_{yy} - 1$. Now we note that writing the determinants of the reduced Laplacians in terms of their cofactors expanded along the $ y $ column, we have that:
	\begin{align*}
		\det \tilde{L}_1 &= \sum_{w \in V(G) \backslash \{x\}} (\tilde{L}_1)_{wy} C_{wy}^{\tilde{L_1}} = (\tilde{L}_1)_{yy}C_{yy}^{\tilde{L}_1} +  \sum_{w \in V(G) \backslash \{x,y\}} (\tilde{L}_1)_{wy} C_{wy}^{\tilde{L_1}} \\ &= (\tilde{L}_{yy} - 1)C_{yy}^{\tilde{L}} + \sum_{w \in V(G) \backslash \{x\}} \tilde{L}_{wy} C_{wy}^{\tilde{L}} \\ &= \det \tilde{L} -  C_{yy}^{\tilde{L}}
	\end{align*}

Recall that $ \det \tilde{L}$ equals the number of spanning trees of $G$. Furthermore, the number of spanning trees on $ G $ is the sum of the number of spanning trees including $ (x,y) $ and the number of spanning trees without $ (x,y)$. Since $ \det \tilde{L}_1 $ is the number of spanning trees of $ G $ without $ (x,y)$, the above equation gives us the desired result that $ C_{yy}^{\tilde{L}}$ is the number of spanning trees including $ (x,y)$. 
\end{proof}

We can use these lemmas to answer the question of when a divisor supported on only two vertices is a generator. The following theorem provides and generalizes an affirmative answer to a question by Becker and Glass \cite[Open~Question~2.8]{BnG} based off of a theorem of Lorenzini \cite[5.1]{Lor1}.

\begin{theorem}\label{T:GOQ2.8}
Let $G$ be a connected multigraph, and $G_1$ the multigraph obtained by deleting any integer $ k_{xy} $ of the edges between the vertices $x$ and $y$ (where negative $ k_{xy}$ represents adding edges). Let $S$ be the subgroup  of $\J(G)$ generated by $[\delta_{xy}]$. Then 
\begin{align*}
[\J(G): S] \ \big\vert\  gcd(|\J(G)|,|\J(G_1)|).
\end{align*} Let $S_1$ be the subgroup  of $\J(G_1)$ generated by $[\delta_{xy}]$. Then 
\begin{align*}
[\J(G): S_1]\ \big\vert\ gcd(|\J(G)|,|\J(G_1)|).
\end{align*}
\end{theorem}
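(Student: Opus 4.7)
The plan is to combine Corollary~\ref{orderCor} with a one-parameter generalization of Lemma~\ref{Cyylem}. First, I would form the reduced Laplacian $\tilde{L}$ by deleting the row and column corresponding to $x$, so that $\tilde{\delta}_{xy}$ becomes the standard basis vector $e_y$. Corollary~\ref{orderCor} then gives
\begin{align*}
[\J(G):S] \ =\ \frac{m}{|[\delta_{xy}]|_{\J(G)}} \ =\ \gcd\bigl(C^{\tilde{L}}e_y,\ m\bigr),
\end{align*}
the gcd of $m$ with all the entries of the $y$-th column of $C^{\tilde{L}}$. In particular this quantity divides $\gcd(C^{\tilde{L}}_{yy},\,m)$, because $C^{\tilde{L}}_{yy}$ is one of those column entries.

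The second ingredient is an extension of Lemma~\ref{Cyylem} to an arbitrary integer $k_{xy}$. Since $G_1$ differs from $G$ only in the number of edges between $x$ and $y$, the reduced Laplacians satisfy $\tilde{L}_1 = \tilde{L} - k_{xy}\,e_y e_y^T$. Expanding $\det\tilde{L}_1$ along column $y$, exactly as in the proof of Lemma~\ref{Cyylem} but carrying the scalar $k_{xy}$ through the calculation, produces
\begin{align*}
|\J(G_1)| \ =\ \det\tilde{L}_1 \ =\ \det\tilde{L} - k_{xy}\,C^{\tilde{L}}_{yy} \ =\ m - k_{xy}\,C^{\tilde{L}}_{yy}.
\end{align*}
Now $\gcd(C^{\tilde{L}}_{yy},m)$ divides both $m$ and $k_{xy}C^{\tilde{L}}_{yy}$, hence divides their difference $|\J(G_1)|$. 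Chaining with the previous paragraph gives
\begin{align*}
[\J(G):S] \ \big\vert\ \gcd\bigl(C^{\tilde{L}}_{yy},\,m\bigr) \ \big\vert\ \gcd\bigl(|\J(G)|,\,|\J(G_1)|\bigr),
\end{align*}
which is the first divisibility.

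For the second divisibility, which I read as $[\J(G_1):S_1]\,\big\vert\,\gcd(|\J(G)|,|\J(G_1)|)$ since $S_1\le \J(G_1)$, the setup is symmetric: $G$ is obtained from $G_1$ by deleting $-k_{xy}$ edges between $x$ and $y$. Rerunning the same argument with the roles of $G$ and $G_1$ interchanged produces the claim at once. The only real obstacle is the generalized cofactor identity above; once it is in hand, the rest is a short gcd manipulation. The expansion in Lemma~\ref{Cyylem} (done for $k_{xy}=1$) adapts without change to any integer $k_{xy}$, including the negative values corresponding to edge insertion, so I expect no serious difficulty beyond careful sign bookkeeping.
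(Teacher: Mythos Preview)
Your proposal is correct and follows essentially the same route as the paper: reduce at $x$, use Corollary~\ref{orderCor} to write $[\J(G):S]=\gcd(C^{\tilde L}e_y,m)\mid\gcd(C^{\tilde L}_{yy},m)$, derive $\det\tilde L_1=\det\tilde L-k_{xy}C^{\tilde L}_{yy}$ by cofactor expansion along column $y$ (the paper phrases this via Lemma~\ref{Cyylem} and spanning-tree counting, but the computation is identical), and then chain gcd divisibilities; the second statement follows by symmetry, and you are right that it should read $[\J(G_1):S_1]$.
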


\begin{proof}
	Recall that $ m = |\J(G)|$. As in Lemma \ref{Cyylem}, let $ x $ correspond to the row and column in our definition of the reduced Laplacian of both $ G $ denoted $ \tilde{L}$ and $ G_1$ denoted $\tilde{L}_1$. By Lemma \ref{Cyylem}, we have that $ C_{yy}^{\tilde{L}}$ is the number of spanning trees using a specific $ x,y $ edge. So, $k_{xy} 				C_{yy}^{\tilde{L}}$ gives the number of spanning trees using any of the $ k_{xy} $ edges that are removed to form $ G_1$. Since the number of spanning trees in $ G$ must be the sum of the number of spanning trees in the graph without $ k_{xy} $ edges plus the number of trees including those edges, we get by the matrix tree theorem that:
    \begin{align*}
    	\det \tilde{L} = \det \tilde{L}_1 + k_{xy} 				C_{yy}^{\tilde{L}}.
    \end{align*}
    
    So we let $ gcd(|\J(G)|,|\J(G_1)|) = \gcd(\det \tilde{L} , \det \tilde{L}_1 ) $, so that both
	\begin{align*}
		&\gcd(\det{\tilde{L}}, C_{yy}^{\tilde{L}}) \ \big|\  gcd(|\J(G)|,|\J(G_1)|) \\ \ \ \text{and} \ \ &\gcd(\det{\tilde{L}_1}, C_{yy}^{\tilde{L}}) \ \big|\ gcd(|\J(G)|,|\J(G_1)|).
	\end{align*}
	Note that both of these statements would be equality if $ k_{xy} = \pm 1$, which is guaranteed if we force $ G $ to be simple.
	
	Now let $w = C^{\tilde{L}}\rd{\delta}_{xy} = C_{yy}^{\tilde{L}}\rd{\delta}_{xy}$, noting that the reduced $ \delta_{xy}$ is just the indicator vector of vertex $ y$. Then, by Corollary \ref{orderCor} we have that 
	\begin{align*}
		|[\delta_{xy}]|_{\J(G)} = \det{\tilde{L}} / \gcd(\det{\tilde{L}}, C^{\tilde{L}} \rd{\delta}_{xy}),
	\end{align*}
	and similarly that 
	\begin{align*}
		|[\delta_{xy}]|_{\J(G_1)} = \det{\tilde{L}_1} / \gcd(\det{\tilde{L}_1}, C^{\tilde{L}_1} \rd{\delta}_{xy}).
	\end{align*}
	
	Putting these facts together, we get that
	\begin{align*}
		[\J(G): S] &= \frac{det \tilde{L}}{|[\delta_{xy}]|_{\J(G)}} = \gcd(\det{\tilde{L}}, C^{\tilde{L}}\rd{\delta}_{xy}) \ \big|\ \gcd(\det{\tilde{L}}, C_{yy}^{\tilde{L}})\\ &\big|\   gcd(|\J(G)|,|\J(G_1)|),
	\end{align*}
	and similarly
	\begin{align*}
		[\J(G_1): S_1]\ \big|\ gcd(|\J(G)|,|\J(G_1)|).
	\end{align*}
	
\end{proof}

Following the statement of this theorem, it is natural to ask if the converse is also true. That is, if $[\delta_{xy}]$ is a generator of the Jacobian, are the orders of the Jacobians with and without an edge necessarily relatively prime? The following theorem answers this question.

\begin{theorem}\label{GCOQ} 
Let $G$ be a connected multigraph, and $G_1$ the graph obtained by removing $ k_{xy}$ edges $(x,y)$ where negative $ k_{xy} $ is adding edges. Moreover, assume that $ \gcd(|\J(G)|,k_{xy}) = 1$. Let $S$ be the subgroup of $\J(G)$ generated by $[\delta_{xy}]$. Then 
\begin{align*}
\gcd(|\J(G)|,|\J(G_1)|)\ \big\vert\ [\J(G):S]^2.
\end{align*} 
Similarly, let $S_1$ be the subgroup of $\J(G_1)$ generated by $[\delta_{xy}]$. Then 
\begin{align*}
\gcd(|\J(G)|,|\J(G_1)|)\ \big\vert\ [\J(G_1):S_1]^2.
\end{align*}
\end{theorem}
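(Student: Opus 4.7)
The plan is to reduce the divisibility $g := \gcd(|\J(G)|, |\J(G_1)|) \mid [\J(G):S]^2$ to a one-line application of Lagrange's theorem on the monodromy pairing, after first establishing a key structural identification between the two Jacobian quotients.

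First I would show that $\J(G)/S \cong \J(G_1)/S_1$ canonically. Working with reduced divisors in $\mathbb{Z}^{n-1}$ (with $x$ taken as the removed vertex, so $\rd\delta_{xy} = e_y$), one has
\[
\J(G)/S \cong \mathbb{Z}^{n-1}/(\Ima\tilde L + \mathbb{Z} e_y), \qquad \J(G_1)/S_1 \cong \mathbb{Z}^{n-1}/(\Ima\tilde L_1 + \mathbb{Z} e_y).
\]
Since $\tilde L_1 = \tilde L - k_{xy}E_{yy}$ with $\Ima E_{yy} = \mathbb{Z} e_y$, the two lattices on the right agree, forcing $[\J(G):S] = [\J(G_1):S_1] =: h$. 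Writing $k, k_1$ for the orders of $[\delta_{xy}]$ in $\J(G), \J(G_1)$, it follows that $|\J(G)| = hk$, $|\J(G_1)| = hk_1$, and $g = h\gcd(k, k_1)$. Both stated divisibilities are thereby reduced to the single claim $\gcd(k, k_1) \mid h$, equivalently $g \mid h^2$.

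Next I would invoke the non-degenerate symmetric pairing $\varphi: \J(G) \times \J(G) \to \ZmZ$ afforded by Proposition \ref{CanIsom}. Set $\eta := [\delta_{xy}]$ and $\eta^\perp := \{\alpha \in \J(G) : \varphi(\eta, \alpha) = 0\}$. Non-degeneracy forces $|\eta^\perp| = |\J(G)|/k = h$. The self-pairing is $\varphi(\eta, \eta) = \rd\delta_{xy}^T C^{\tilde L}\rd\delta_{xy} = C_{yy}^{\tilde L} \bmod m$, whose order in $\ZmZ$ equals $m/\gcd(m, C_{yy}^{\tilde L}) = m/g$; here the identity $g = \gcd(m, C_{yy}^{\tilde L})$ comes from combining the relation $m - m_1 = k_{xy}C_{yy}^{\tilde L}$ (proved in Theorem \ref{T:GOQ2.8}) with the hypothesis $\gcd(m, k_{xy}) = 1$. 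Consequently $S \cap \eta^\perp = \{j\eta : j\varphi(\eta, \eta) \equiv 0 \bmod m\}$ is the cyclic subgroup of $S$ generated by $(m/g)\eta$, and therefore has order $k/(m/g) = g/h$.

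The conclusion is now immediate: since $S \cap \eta^\perp$ is a subgroup of $\eta^\perp$, Lagrange's theorem gives $g/h = |S \cap \eta^\perp| \mid |\eta^\perp| = h$, which rearranges to $g \mid h^2 = [\J(G):S]^2$. The second divisibility $g \mid [\J(G_1):S_1]^2$ is the same statement, given $[\J(G_1):S_1] = h$ from the first step. I expect the main obstacle to be isolating the canonical identification $\J(G)/S \cong \J(G_1)/S_1$ cleanly from the column-space identity; everything downstream is a direct manipulation of the monodromy pairing followed by one appeal to Lagrange.
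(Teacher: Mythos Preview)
Your argument is correct, and its core is the same as the paper's: the map $\varphi(\eta,-)$ you use is exactly the monodromy-weight homomorphism $\phi$ the paper builds from $\tilde w = C^{\tilde L}\rd\delta_{xy}$, your $\eta^\perp$ is their $\ker(\phi)$, your $S\cap\eta^\perp$ is their $\ker(\phi_S)$, and your Lagrange step is their observation that $\ker(\phi_S)\subseteq\ker(\phi)$, from which they compute $\gcd(|\J(G)|,|\J(G_1)|) = [\J(G):S]^2/c$ for the integer $c = [\ker(\phi):\ker(\phi_S)]$.

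The one genuine addition in your write-up is the preliminary identification $\J(G)/S \cong \J(G_1)/S_1$ via the lattice equality $\Ima\tilde L + \mathbb{Z}e_y = \Ima\tilde L_1 + \mathbb{Z}e_y$. The paper never states $[\J(G):S] = [\J(G_1):S_1]$ explicitly; it instead dispatches the second divisibility by symmetry (swapping the roles of $G$ and $G_1$, which is legitimate since $\gcd(m_1,k_{xy}) = \gcd(m,k_{xy}) = 1$). Your route has the advantage of making that index equality visible and handling both conclusions at once; the paper's route avoids the extra lemma at the cost of an implicit appeal to the symmetric hypothesis on $G_1$. Either way, the heart of the proof---computing $|S\cap\eta^\perp| = g/h$ and comparing it to $|\eta^\perp| = h$---is identical.
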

\begin{proof}
As in Theorem \ref{T:GOQ2.8} we choose $ \tilde{L} $ to be the reduced Laplacian, reducing at vertex $ x$.

Note that since $\gcd(m,k_{xy}) = 1$ and $\det \tilde{L} = \det \tilde{L}_1 + k_{xy} C_{yy}^{\tilde{L}} $ as in the proof of the previous theorem, we have that 
\begin{align}\label{newline}
\gcd(|\J(G)|,|\J(G_1)|) = \gcd(m,|C^{\tilde{L}}_{yy}|).  
\end{align}

Let $\tilde{w} = C^{\tilde{L}}\rd{\delta}_{xy}$, where $C^{\tilde{L}}$ is the cofactor matrix of the reduced Laplacian. Then let $\phi:\J(G) \to \ZmZ$ be the map induced by inner product with $ w$. Now, note that $ \phi $ is a group homomorphism, so $m = |\text{Im}(\phi)| \cdot |\ker(\phi)|$. So, we also have that
\begin{align}\label{tgline1}
|\ker(\phi)|  = m / |\text{Im}(\phi)|.
\end{align}

Now, let $\phi_{S}: S \to \ZmZ$ be the map $\phi$ restricted to the subgroup generated by $[\delta_{xy}]$. Note that since $w = C^{\tilde{L}}\rd{\delta}_{xy}$, we have that the weight on the $y$th vertex is exactly $C^{\tilde{L}}_{yy}$. Thus the image of $S$ (the subgroup consisting of multiples of $\delta_{xy}$, ie multiples with weight only on $ y $ in the reduced Laplacian) under the map $\phi$ is the subgroup consisting of multiples of $\gcd(m,|C^{\tilde{L}}_{yy}|)$ in $\ZmZ$. The order of this subgroup is $m/\gcd(m,|C^{\tilde{L}}_{yy}|)$, so $|\text{Im}(\phi_{S})| = m/\gcd(m,|C^{\tilde{L}}_{yy}|)$, and thus 
\begin{align}\label{tgline2}
\gcd(m,|C^{\tilde{L}}_{yy}|) = m/|\text{Im}(\phi_{S})|.
\end{align}

Observe that this is similar to but not the same as Lemma \ref{GTwsur}, since now we have $ \gcd(m,|C^{\tilde{L}}_{yy}|)$ in the denominator rather than $ \gcd(m,\tilde{w})$. 

Since $ \varphi $ is also a homomorphism, we have $ |S| = |\text{Im}(\phi_{S})| \cdot |\ker(\phi_{S})|$. This gives us that 
\begin{align}\label{tgline3}
|\text{Im}(\phi_{S})|  = |S| / |\ker(\phi_{S})|.
\end{align}

The last fact we need is that $ \ker(\phi_{S})$ is a subgroup of $ \ker(\phi)$. Thus, we have that for some integer $c$ that 
\begin{align}\label{ttgline4}
|\ker(\phi_{S})| = |\ker(\phi)| / c. 
\end{align}

Now we have that by (\ref{newline}), (\ref{tgline2}), and (\ref{tgline3})
\begin{align*}
	\gcd(|\J(G)|,|\J(G_1)|) &= \gcd(m,|C^{\tilde{L}}_{yy}|) = \frac{m}{|\text{Im}(\phi_{S})|} = \frac{m}{|S| / |\ker(\phi_{S})|}.
\end{align*}
By lemma \ref{GTwsur} we have that $ |S| = |\text{Im}(\phi)|$. So, with (\ref{ttgline4}) and (\ref{tgline1}) we conclude that 
\begin{align*}
\gcd(|\J(G)|,|\J(G_1)|) &= \frac{m}{|\text{Im}(\phi)| / |\ker(\phi_{S})|} = \frac{m}{|\text{Im}(\phi)|} |\ker(\phi_{S})|\\ & = \frac{m}{|\text{Im}(\phi)|} \left(\frac{|\ker(\phi)|}{c}\right) = \frac{m}{|\text{Im}(\phi)|} \left(\frac{m/|\text{Im}(\phi)|}{c}\right)
	\\ &= \frac{(m/|\text{Im}(\phi)|)^2}{c} = \frac{[\J(G): S]^2}{c}.
\end{align*} 
This proves the theorem for $ G $ and $ S $, and in addition, the multiplicative factor is the index of $\ker(\phi|_{S})$ in $\ker(\phi)$. The proof for $G_1$ and $ S_1 $ follows immediately since we allowed removing or adding edges.
\end{proof} 
	
Note that we have proved Theorem \ref{GCOQ} for multigraphs where $ k_{xy}$ is relatively prime to $ m$. If we have a simple graph, then $ k_{xy} = \pm 1$ is relatively prime with $ m$ immediately, so we get the same result. From the above theorems we can now give a precise condition for when $ [\delta_{xy}] $ is a generator of $\J(G)$ by considering the case when $ \gcd(|\J(G)|,|\J(G_1)|) = 1$, proving Corollary~\ref{ifftheorem} from the introduction.

\section{Edge contraction} \label{sec:contractions}

Theorems \ref{T:GOQ2.8} and \ref{GCOQ} shed some light on the behavior of the Jacobian when an edge is removed from the graph. It would be natural to ask what happens when a vertex is removed. This is the subject of the following corollary and proposition resulting from our main theorem.

\begin{corollary}\label{C:2.4}
Let $G$ be graph and let $e = (x,y)$ be an edge. Let $G/e$ denote the graph obtained by identifying the vertices $x$ and $y$. Let $S$ be as defined in Theorem \ref{T:GOQ2.8}. Then $[\J(G):S]$ $\big\vert$ $gcd(|\J(G)|,|\J(G/e)|)$.
\end{corollary}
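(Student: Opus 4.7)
The plan is to reduce this directly to Theorem \ref{T:GOQ2.8} via the classical deletion-contraction identity for spanning trees. Let $G_1 = G \setminus e$ denote the graph obtained by deleting the edge $e = (x,y)$. The matrix-tree theorem says that $|\J(G)|$ equals the number of spanning trees $\tau(G)$, and the familiar deletion-contraction relation for multigraphs gives
\begin{align*}
\tau(G) = \tau(G/e) + \tau(G \setminus e),
\end{align*}
since every spanning tree of $G$ either contains $e$ (corresponding bijectively to a spanning tree of $G/e$ after removing resulting loops) or does not (a spanning tree of $G \setminus e$). Thus $|\J(G)| = |\J(G/e)| + |\J(G_1)|$.

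The key arithmetic observation is that this linear relation forces
\begin{align*}
\gcd(|\J(G)|, |\J(G/e)|) = \gcd(|\J(G)|, |\J(G)| - |\J(G_1)|) = \gcd(|\J(G)|, |\J(G_1)|).
\end{align*}
Theorem \ref{T:GOQ2.8}, applied with $k_{xy} = 1$, already gives $[\J(G): S] \mid \gcd(|\J(G)|, |\J(G_1)|)$, and the equality above then yields $[\J(G): S] \mid \gcd(|\J(G)|, |\J(G/e)|)$, which is exactly what we want.

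The only subtlety worth checking is the degenerate case where $e$ is a bridge of $G$, so that $G_1 = G \setminus e$ is disconnected and $|\J(G_1)| = 0$. In that situation $\gcd(|\J(G)|, |\J(G_1)|) = |\J(G)|$ and $|\J(G/e)| = |\J(G)|$, so the divisibility reduces to the trivial statement $[\J(G):S] \mid |\J(G)|$, which holds by Lagrange's theorem. Hence no genuine obstacle appears; the proof is a one-line consequence of deletion-contraction together with Theorem \ref{T:GOQ2.8}, and the main content has already been packaged into that theorem.
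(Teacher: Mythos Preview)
Your proof is correct and follows essentially the same route as the paper: both use the deletion--contraction identity $T(G)=T(G-e)+T(G/e)$ to deduce $\gcd(|\J(G)|,|\J(G/e)|)=\gcd(|\J(G)|,|\J(G-e)|)$ and then invoke Theorem~\ref{T:GOQ2.8}. Your explicit treatment of the bridge case is a nice extra check, but the paper's argument is otherwise the same.
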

\begin{proof}
Let $T(G)$ denote the number of spanning trees of $G$. As explained in \cite{kocay}, $T(G)$ obeys the following recurrence: $T(G) = T(G-e) + T(G/e)$, where $G-e$ denotes the graph obtained from $G$ upon removal of the edge $e$. Thus $\gcd(|\J(G)|,|\J(G/e)|) = \gcd(|\J(G)|,|\J(G-e)|)$, and the result follows by Theorem \ref{T:GOQ2.8}.
\end{proof}

\begin{remark}
This corollary essentially is saying that $ T(G/(x,y)) = C_{yy}^{\tilde{L}}$ where $ \tilde{L} $ is $ L $ with the $ x $ row and column deleted. This can be seen by comparing the above recurrence with the equation for $\det \tilde{L}$ in the proof of Theorem \ref{T:GOQ2.8}.
\end{remark}


It would then be natural to ask whether $\J(G/e)$ is cyclic if the orders of the Jacobians of $G$ and $G/e$ are relatively prime. Theorem 5.1 of \cite{Lor1} proves that in this case $\J(G)$ is cyclic, and in Lemma 6.2 of \cite{Lor2} this result is extended to show that $\J(G-e)$ is cyclic. We now apply Theorem \ref{T:GOQ2.8} to show that on an undirected graph, $\J(G/e)$ must be cyclic as well.

\begin{proposition}
Let $G$ be a graph, and $e$, $G/e$ defined as above. Let $z$ be the vertex obtained by identifying $x$ and $y$. If $|\J(G)|$ and $|\J(G/e)|$ are relatively prime and 
\[
	D_x(v) =
    \begin{cases}
    	0 & (v,x) \notin E(G) \\
        (\text{val}_G(x)-1) & v = z \\
        -1 & (v,x) \in E(G), v \ne z,
    \end{cases}
\]
then $\J(G/e)$ is cyclic and $ [D_x] $ generates it.
\end{proposition}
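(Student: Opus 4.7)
The plan is to exploit the natural contraction map on divisor groups and descend it to a surjection of Jacobians whose quotient is controlled by $[D_x]$. First, using the deletion-contraction identity $T(G)=T(G-e)+T(G/e)$ already invoked in the proof of Corollary~\ref{C:2.4}, the hypothesis $\gcd(|\J(G)|,|\J(G/e)|)=1$ immediately gives $\gcd(|\J(G)|,|\J(G-e)|)=1$, so Corollary~\ref{ifftheorem} applied to the pair $(G,G-e)$ yields that $[\delta_{xy}]$ generates $\J(G)$. This will be the leverage.

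Next I would define the contraction map $\pi:\Div(G)\to\Div(G/e)$ by sending the indicator divisor at each $v\neq x,y$ to itself and the indicators at $x$ and $y$ both to the indicator at $z$; this restricts to a surjective homomorphism on degree-zero divisors. The crucial step is to identify the image of $\Prin(G)$. A short calculation shows that the image under $\pi$ of the principal divisor obtained by firing any $v\neq x,y$ in $G$ is exactly the principal divisor obtained by firing $v$ in $G/e$, so such firings contribute only $\Prin(G/e)$. Inspecting the formula for $D_x$ (which implicitly assumes $K_{x,y}=1$) shows that firing $x$ in $G$ maps under $\pi$ to $-D_x$, and by symmetry firing $y$ maps to $-D_y$; moreover $D_x+D_y$ is (minus) the principal divisor obtained by firing $z$ in $G/e$, so it lies in $\Prin(G/e)$. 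Therefore, modulo $\Prin(G/e)$, the images of all firings lie in $\langle D_x\rangle$, giving $\pi(\Prin(G))=\Prin(G/e)+\langle D_x\rangle$, and $\pi$ descends to a surjective homomorphism
\[\bar\pi:\J(G)\twoheadrightarrow\J(G/e)\,/\,\langle[D_x]\rangle.\]

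To conclude, since both $x$ and $y$ map to $z$ under $\pi$, we have $\pi(\delta_{xy})=0$, hence $\bar\pi([\delta_{xy}])=0$. But $[\delta_{xy}]$ generates $\J(G)$ by the first step, so $\bar\pi$ is identically zero; surjectivity then forces $\J(G/e)/\langle[D_x]\rangle$ to be trivial, meaning $[D_x]$ generates $\J(G/e)$ and in particular $\J(G/e)$ is cyclic.

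The main obstacle is the bookkeeping in the middle step: one must verify that firing $x$ in $G$ followed by contraction yields exactly $-D_x$ (and symmetrically for $y$), and that $D_x+D_y\in\Prin(G/e)$. These identities come down to carefully tracking valencies and edge multiplicities at $x$, $y$, and $z$, making essential use of the implicit hypothesis $K_{x,y}=1$; once they are checked, the rest of the argument is a short formal chain.
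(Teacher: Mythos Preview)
Your argument is correct. It is essentially the same mechanism as the paper's proof, repackaged in more structural language.

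The paper proceeds element by element: it lifts an arbitrary $D'\in\Div^0(G/e)$ to some $D\in\Div^0(G)$, takes a firing script $\sigma$ on $G$ (normalized so $\sigma(x)=0$) carrying $D$ to a multiple of $\delta_{xy}$, transfers $\sigma$ to a script $\sigma'$ on $G/e$ by setting $\sigma'(z)=\sigma(y)$, and then computes directly that $\sigma'$ carries $D'$ to $-\sigma'(z)\,D_x$. Your contraction homomorphism $\pi$ and the identification $\pi(\Prin(G))=\Prin(G/e)+\langle D_x\rangle$ encode exactly this transfer of firing scripts: the paper's normalization $\sigma(x)=0$ corresponds to your choice of $D_x$ (rather than $D_y$) as the generator of the extra piece, and the paper's explicit formula for $D_1'$ is the computation underlying your claim that firing $x$ in $G$ contracts to $-D_x$. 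Both arguments then finish the same way, using that $[\delta_{xy}]$ generates $\J(G)$.

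What your packaging buys is a cleaner separation of the two ingredients (the surjection $\bar\pi$ and the fact that its domain is generated by something in its kernel), and it makes the remark $D_x\sim -D_y$ fall out immediately from $D_x+D_y\in\Prin(G/e)$. The paper's version has the advantage of being a single concrete calculation with no quotient bookkeeping. The verifications you flag as the ``main obstacle'' are exactly the computation the paper carries out explicitly.
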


\begin{proof}
Consider some arbitrary $D' \in$ Div$^0(G/e)$ and let $D \in$ Div$^0(G)$ such that $\forall v \neq x,y$, $D(v) = D'(v)$. Then since the values sum to 0, we have $D'(z) = D(x) + D(y)$. Consider the firing script $\sigma$ taking $D$ to an equivalent multiple of $\delta_{xy}$, and let $\sigma(v)$ denote the number of times the vertex $v$ is fired. Without loss of generality let $\sigma(x) = 0$. Now let $\sigma'$ be a set of firing moves on $G/e$ such that $\sigma'(z) = \sigma(y)$ and $\sigma'(v) = \sigma(v)$ otherwise. Let $D_1'$ be the divisor obtained from $D'$ after the firing of $\sigma'$. Then we have
\begin{align*}
    D_1'(v) &= D'(v) - \text{val}_{G/e}(v)\sigma'(v) + \sum_{(u,v)\in G/e}\sigma'(u) \\
    &=
    \begin{cases}
    	0 & (v,x) \notin G \\
        -\sigma'(z)(\text{val}_G(x)-1) & v = z \\
        \sigma'(z) & (v,x) \in G, v \ne z.
    \end{cases}
\end{align*}
Thus $D_1' = -\sigma'(z)D_x$, where $D_x$ is the divisor given by
\[
	D_x(v) =
    \begin{cases}
    	0 & (v,x) \notin G \\
        (\text{val}_G(x)-1) & v = z \\
        -1 & (v,x) \in G, v \ne z.
    \end{cases}
\]
Thus every $D' \in \text{Div}^0(G/e)$ is a equivalent to a multiple of $D_x$. Hence $\J(G/e)$ is generated by $[D_x]$, thus it is cyclic.
\end{proof}

Note that the form of $D_x$ depends on a choice of ordering of $x$ and $y$. The following remark gives the relation between the generators if the ordering is reversed.

\begin{remark}
Let $D_y$ be the divisor obtained by reversing the ordering of $x$ and $y$ such that $ [D_y]$ generates $\J(G)$. Then $D_x \sim -D_y$.
\end{remark}

\section{Bounding the order of $[\delta_{xy}]$ below} \label{sec:bounding}

The following proposition bounds the maximal order of some $[\delta_{xy}]$.

\begin{proposition}\label{weakbounds}
Let $G$ be a graph with $n=|V(G)|$ vertices and $\epsilon=|E(G)|$ edges. 
\begin{enumerate}
\item There exists some edge $(x,y)$ such that $|[\delta_{xy}]|_{\J(G)} \ge \epsilon/(n-1)$.
\item There exists some edge $(u,v)$ such that $|[\delta_{uv}]|_{\J(G)} \ge \epsilon/(\epsilon-n+1)$.
\end{enumerate}
\end{proposition}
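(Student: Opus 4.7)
My plan is to derive both parts from a single weak lower bound on $|[\delta_{xy}]|_{\J(G)}$ combined with a double count over the edges of $G$. Reducing the Laplacian $L$ at the vertex $x$, the reduced divisor $\tilde\delta_{xy}$ is the standard basis vector at $y$, so by Corollary~\ref{orderCor},
\[
|[\delta_{xy}]|_{\J(G)} \;=\; \frac{\det\tilde L}{\gcd(\det\tilde L,\,C^{\tilde L}\tilde\delta_{xy})} \;\geq\; \frac{\det\tilde L}{|C^{\tilde L}_{yy}|} \;=\; \frac{T(G)}{T_e},
\]
where $T(G) = m$ denotes the number of spanning trees of $G$ and $T_e$ the number of those spanning trees that contain the edge $e = (x,y)$; the last equality is Lemma~\ref{Cyylem}. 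The rest of the proof amounts to choosing $e$ well by pigeonhole.

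For (1), I would use the elementary identity
\[
\sum_{e\in E(G)} T_e \;=\; (n-1)\,T(G),
\]
which holds because every spanning tree has exactly $n-1$ edges. Pigeonhole then produces an edge $(x,y)$ with $T_e \le (n-1)T(G)/\epsilon$, and substituting into the displayed bound yields $|[\delta_{xy}]|_{\J(G)} \ge \epsilon/(n-1)$. For (2), I would instead exploit the complementary relation $T(G) = T_e + T(G-e)$ together with the identity $\gcd(T(G),T_e) = \gcd(T_e,T(G-e))$, which gives the alternative weak bound $|[\delta_{xy}]|_{\J(G)} \ge T(G)/T(G-e)$ whenever $e$ is not a bridge. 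The corresponding edge sum
\[
\sum_{e\in E(G)} T(G-e) \;=\; \epsilon\,T(G) - (n-1)\,T(G) \;=\; (\epsilon-n+1)\,T(G)
\]
then produces by pigeonhole an edge with $T(G-e) \le (\epsilon-n+1)T(G)/\epsilon$, and hence $|[\delta_{uv}]|_{\J(G)} \ge \epsilon/(\epsilon-n+1)$.

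The delicate step is ensuring that the edge selected in (2) is not a bridge, since a bridge contributes $T(G-e)=0$ and the bound $T(G)/T(G-e)$ is then vacuous; indeed $[\delta_{xy}]$ is itself trivial for a bridge, as firing every vertex on one side of the bridge realises $\delta_{xy}$ as a principal divisor. Because bridges contribute $0$ to the sum $\sum_e T(G-e)$, restricting the pigeonhole to the non-bridge edges is the natural way around this, and is the only point in the argument that is not a routine calculation.
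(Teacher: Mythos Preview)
Your argument is essentially the paper's own, only rephrased: writing $T_e=C^{\tilde L}_{yy}$ and $T(G-e)=m-C^{\tilde L}_{yy}$, the paper pigeonholes on $\sum_e T_e=(n-1)m$ for part~(1) exactly as you do, and for part~(2) it uses the same inequality $\gcd(m,C^{\tilde L}_{yy})\le m-C^{\tilde L}_{yy}=T(G-e)$ together with the complementary pigeonhole bound $T(G-e)\le m(\epsilon-n+1)/\epsilon$. So the two routes coincide.

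The one point of difference is that you flag the bridge case, which the paper passes over in silence. Your suggested fix, however, does not recover the stated bound. Restricting the pigeonhole to the $\epsilon-b$ non-bridge edges (where $b$ is the number of bridges) only gives some non-bridge edge with $T(G-e)\le(\epsilon-n+1)m/(\epsilon-b)$, hence $|[\delta_{uv}]|_{\J(G)}\ge(\epsilon-b)/(\epsilon-n+1)$, which is strictly weaker when $b>0$. In fact no fix is possible, because statement~(2) is false as written once bridges are present: a triangle with one pendant edge has $n=\epsilon=4$ and $|\J(G)|=3$, so every $|[\delta_{xy}]|\le 3$, yet the claimed bound is $\epsilon/(\epsilon-n+1)=4$. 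So the issue you spotted is a gap in the proposition itself rather than merely in your argument; both your proof and the paper's go through cleanly under the additional hypothesis that $G$ is bridgeless.
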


Note that in the first inequality the bound is tight for a spanning tree on $n$ vertices, and in the second the bound is tight for an $n$-cycle.

\begin{proof}
Recall from Lemma \ref{Cyylem} that $C_{yy}^{\tilde{L}}$ gives exactly the number of spanning trees of $G$ containing the edge $(x,y)$. Using the pigeonhole principle, we obtain the following bound: there exists $(x,y)$ such that $C_{yy}^{\tilde{L}} \le m(n-1)/\epsilon$. By Theorem \ref{T:GOQ2.8}, we have $m/|[\delta_{xy}]|_{\J(G)} \le \gcd(m, C_{yy}^{\tilde{L}}) \le C_{yy}^{\tilde{L}} \le m(n-1)/\epsilon$. The first of the above inequalities follows.

For the second, we instead bound $C_{yy}^{\tilde{L}}$ from below: there is some $(x,y)$ such that $C_{yy}^{\tilde{L}} \ge m(n-1)/\epsilon$. Thus $\gcd(m, C_{yy}^{\tilde{L}}) \le m - C_{yy}^{\tilde{L}} \le m - m(n-1)/\epsilon$ and the second inequality follows.
\end{proof}

Note that Lemma 27 of \cite{GJRWW14} gives a lower bound for $|[\delta_{xy}]|_{\J(G)}$ in a biconnected graph as val$(x)$, and the maximum valency of any vertex is bounded from below by $2\epsilon/n$. This is almost always better than the bounds of Corollary \ref{weakbounds}, but ours hold more generally. In fact, for a biconnected graph we are able to strengthen the result of \cite{GJRWW14}.

\begin{proposition}\label{GSUMRY14}
Given a biconnected simple graph $G$ and an edge $(x,y)$,
\[
|[\delta_{xy}]|_{\J(G)} \ge val(x) + \frac{val(x)-1}{val(y)-1}.
\]
\end{proposition}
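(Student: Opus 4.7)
The plan is to analyze $k := |[\delta_{xy}]|_{\J(G)}$ directly via the firing script realizing $k\delta_{xy}\sim 0$, apply a discrete maximum principle together with biconnectedness to bound the script's entries from below, and then combine the Laplacian equations at $x$ and $y$ to extract the claimed inequality.

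Fix $\sigma\in\mathbb{Z}^n$ with $L\sigma = k\delta_{xy}$, normalized so $\sigma(x)=0$ (adding a constant does not change $L\sigma$). The equation $L\sigma(v)=0$ for $v\notin\{x,y\}$ says that $\sigma$ is discrete-harmonic off the two distinguished vertices. Since $L\sigma(x)=-k<0$, the value $\sigma(x)$ is strictly less than the average of $\sigma$ over the neighbors of $x$, so $\sigma(x)$ cannot be the maximum; symmetrically $\sigma(y)$ cannot be the minimum. Combining this with the standard propagation-of-extrema argument (an extremum attained at some $v_0\notin\{x,y\}$ propagates by harmonicity to all neighbors, and iterating with connectedness of $G$ would force $\sigma$ constant, contradicting $k\ge 1$) yields that the minimum of $\sigma$ is achieved at $x$ and the maximum $M:=\sigma(y)$ at $y$. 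In particular $0\le\sigma(v)\le M$ for every $v$.

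The next step uses biconnectedness to promote $\sigma(v)\ge 0$ to $\sigma(v)\ge 1$ for every $v\ne x$. Let $U:=\{v:\sigma(v)=0\}$. For any $v\in U$ with $v\ne x,y$, harmonicity forces every neighbor of $v$ also to lie in $U$. Since $y\notin U$ (we have $M\ge 1$ because $k\ge 1$ and $\sigma$ is nonconstant), the set $U\setminus\{x\}$ would be a union of connected components of $G-x$; if it were nonempty then $G-x$ would be disconnected, contradicting biconnectedness. Hence $U=\{x\}$.

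The final step is to read off the two relevant Laplacian equations. At $x$: $\sum_{v\sim x}\sigma(v)=k$, where one summand equals $M$ (the term for $y$) and the remaining $\text{val}(x)-1$ summands are integers at least $1$, giving $k\ge M+\text{val}(x)-1$. At $y$: $\text{val}(y)\,M-\sum_{u\sim y}\sigma(u)=k$, where the summand at $x$ is $0$ and the remaining $\text{val}(y)-1$ summands are each at least $1$, giving $\text{val}(y)\,M\ge k+\text{val}(y)-1$, equivalently $M\ge 1+(k-1)/\text{val}(y)$. Substituting the second inequality into the first and solving the resulting linear inequality for $k$ (multiplying through by $\text{val}(y)-1$, which is positive since a biconnected graph has no leaves) yields exactly $k\ge \text{val}(x)+(\text{val}(x)-1)/(\text{val}(y)-1)$. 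The main obstacle is the second step: without biconnectedness one could have $\sigma$ vanish on an entire branch attached to $x$, so the bound $\sigma(v)\ge 1$ for $v\sim x$ would fail and the $x$-equation estimate would collapse; everything else is either a direct consequence of $L\sigma=k\delta_{xy}$ or routine inequality manipulation.
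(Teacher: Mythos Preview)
Your proof is correct and follows essentially the same approach as the paper's: both arguments analyze the firing script $\sigma$ realizing $k\delta_{xy}\sim 0$, use a maximum/minimum principle together with biconnectedness to force $\sigma(v)\ge 1$ for all $v\ne x$, and then combine the Laplacian equations at $x$ and $y$ to obtain the inequality $k\ge \mathrm{val}(x)+(k-1)/\mathrm{val}(y)$. The paper organizes the first part as an explicit case analysis on the location of the minimum of $\sigma$ while you phrase it as a discrete harmonic extremum principle, but the substance and the final chain of inequalities are identical.
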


\noindent This is always stronger than the result of \cite{GJRWW14}, and equality is attained when $G$ is a complete graph. 

\begin{proof}
Let $a$ be an integer such that $a\delta_{xy} \sim 0$, and let $\sigma$ be the firing script taking $a\delta_{xy}$ to $0$. Now let $z$ be the vertex such that $z$ is fired the least times (i.e. $\sigma(z)$ is the minimum of $\sigma$). Without loss of generality by Convention \ref{notat}, we can set $\sigma(z) = 0$. Now note that since $\sigma(v) \ge 0$ for all $v$, the value of the divisor on $z$ can only increase upon firing $\sigma$. We have 3 cases:

{\setlength{\parindent}{0cm}
\underline{Case 1}: $z = y$.

This case is not possible since the initial value on $y$ is $a$. If $\sigma(y) = 0$, $\sigma$ can only increase the value of the divisor on $y$, thus in this case we cannot have that $a\delta_{xy} + L\sigma = \textbf{0}$.

\underline{Case 2}: $z \neq x,y$.

We assume $\sigma(y) > 0$, else we consider it as Case 1. $z$ does not fire, and hence for the value of the divisor to remain at $0$ we require that all neighbors of $z$ do not fire as well. We repeat this argument for all neighbors of $z$ that are not $x$ or $y$. Since $G$ is biconnected there exists a path from each vertex $z$ to $y$ not passing through $y$. Thus we eventually conclude that $\sigma(y) = 0$. This is a contradiction since we initially assumed that $\sigma(y) > 0$.

\underline{Case 3}: $z = x$.

We can assume that for all $v \neq x$, $\sigma(v) > 0$, since otherwise this would reduce to one of the earlier cases. Thus each neighbor of $x$ fires at least once. For the value on $y$ to be zero after firing, $y$ must fire at least $a/\text{val}(y)$ times. In fact, each neighbor of $y$ that is not $x$ must fire at least once (else we have Case 2), thus $y$ needs to fire at least $(a + \text{val}(y) - 1)/\text{val}(y)$ times. Since $a = \sum \sigma(v)$ over all neighbors $v$ of $x$, we have $a \ge \text{val}(x) - 1 + (a + \text{val}(y) - 1)/\text{val}(y) = \text{val}(x) + (a - 1)/\text{val}(y)$. Further manipulation gives the result.
}
\end{proof}

\begin{remark} Much of this proof follows immediately from the fact that $\sigma$ is an integer-valued harmonic function on the graph $G$ with source $y$ and sink $x$. This necessarily implies that $y$ and $x$ are the unique maximum and minimum of $\sigma$ respectively. The bound follows soon after.
\end{remark}

We have a similar result for multigraphs.

\begin{proposition}\label{GSUMRY14v2}
Given a multigraph $G$ and an edge $(x,y)$,
\[
|[\delta_{xy}]|_{\J(G)} \ge (val(x) -1)\frac{val(y)}{val(y)-1}.
\]
\end{proposition}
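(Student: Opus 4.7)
The plan is to mirror the chip-firing argument from the proof of Proposition~\ref{GSUMRY14}, keeping careful track of edge multiplicities $K_{x,y}$. Let $a = |[\delta_{xy}]|_{\J(G)}$ and let $\sigma \in \mathbb{Z}^n$ be an integer firing script realizing $a\delta_{xy} \sim 0$, so $L\sigma = a\delta_{xy}$. After shifting $\sigma$ by an integer multiple of the all-ones vector, I assume its minimum value is $0$. Running the three-case analysis from Proposition~\ref{GSUMRY14} (and invoking the Remark that identifies $\sigma$ as an integer-valued harmonic function on $V(G) \setminus \{x,y\}$ with source $y$ and sink $x$), the minimum cannot be attained at $y$ (its initial value $a > 0$ can only grow if $\sigma(y) = 0$) nor at any strictly interior vertex $z \ne x,y$ (zero propagates along neighbors and eventually forces $\sigma(y) = 0$). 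Hence $\sigma(x) = 0$ and $\sigma(w) \ge 1$ for every $w \ne x$; set $s := \sigma(y) \ge 1$.

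The key step is to derive $a - s \ge \Val(x) - 1$. The $x$-coordinate of $L\sigma = a\delta_{xy}$ with $\sigma(x) = 0$ gives
\[
a \;=\; \sum_{w \sim x}K_{x,w}\,\sigma(w)
\;=\; K_{x,y}\,s + \sum_{\substack{w \sim x \\ w \ne y}}K_{x,w}\,\sigma(w),
\]
so subtracting $s$ and applying the lower bounds $\sigma(w) \ge 1$ for $w \ne x$ and $s \ge 1$ yields
\[
a - s \;=\; (K_{x,y} - 1)\,s + \sum_{\substack{w \sim x \\ w \ne y}}K_{x,w}\,\sigma(w)
\;\ge\; (K_{x,y} - 1) + (\Val(x) - K_{x,y}) \;=\; \Val(x) - 1.
\]
On the other hand, the $y$-coordinate $\Val(y)\,s - \sum_{w \sim y}K_{y,w}\sigma(w) = a$ together with the trivial bound $\sigma(w) \ge 0$ gives $s \ge a/\Val(y)$. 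Substituting into $a \ge s + \Val(x) - 1$ and clearing denominators produces $a(\Val(y) - 1) \ge \Val(y)(\Val(x) - 1)$, equivalent to the claimed inequality.

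The main obstacle is the case-analysis step in the setup. As in Proposition~\ref{GSUMRY14}, ruling out an interior minimum of $\sigma$ requires a structural hypothesis---biconnectivity, or at least the property that every vertex is joined to $y$ by a path avoiding $x$---so that propagation of zeros from any putative interior minimizer eventually forces $\sigma(y) = 0$, contradicting $a > 0$. Parallel edges themselves pose no new difficulty, since they only contribute larger non-negative summands $K_{v,w}\sigma(w)$ without altering any sign; the only real adaptation from the simple-graph setting is the bookkeeping of the factor $K_{x,y}$ which appears in the $(K_{x,y} - 1)s$ term above.
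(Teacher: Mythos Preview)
Your argument is correct and is exactly the paper's approach: the paper's proof reads in its entirety ``The proof proceeds similarly, except this time we can only bound the number of times $y$ fires by $a/\operatorname{val}(y)$. Thus we have $a \ge \operatorname{val}(x) - 1 + a/\operatorname{val}(y)$,'' and your derivation simply makes the multigraph bookkeeping with $K_{x,y}$ explicit. Your caveat about needing biconnectivity (or at least a path from every vertex to $y$ avoiding $x$) for the Case~2 argument is well taken---the paper's ``proceeds similarly'' silently inherits that hypothesis from Proposition~\ref{GSUMRY14}, and indeed the bound fails otherwise (e.g.\ two triangles wedged at $x$).
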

\begin{proof}
The proof proceeds similarly, except this time we can only bound the number of times $y$ fires by $a/\text{val}(y)$. Thus we have $a \ge \text{val}(x) - 1 + a/\text{val}(y)$. Solving this gives the result.
\end{proof}
Note that as long as $x$ is chosen to be the vertex with higher valency, this result is always stronger than that of \cite{GJRWW14}.

\section{Conjectures and experimental results on random graphs} \label{sec:random}

\subsection{Random graph conjectures}
As in the introduction, we consider Erd\H{o}s-R\'enyi random graphs. 
One of the original motivations of our research was to use facts about generators to find a lower bound on the probability that the Jacobian of a graph is cyclic. As mentioned previously, it has been proven in \cite{W14} that $\prod_{i=1}^\infty \zeta(2i + 1)^{-1}$ is an upper bound on the probability. However, no lower bound is known. It is conceivable that better understanding of when $ [\delta_{xy}] $ generates the Jacobian will shed light on a lower bound.

By Corollary \ref{ifftheorem} we have that $ [\delta_{xy}] $ is a generator of $ \J(G)$ exactly when $|\J(G)| $ and $|\J(G_1)|$ are relatively prime. The question then becomes how the orders of these two groups depend on each other. Two positive integers chosen uniformly at random from the integers less than $ n $ are coprime with probability $ \zeta(2)^{-1}$ as $ n $ goes to infinity \cite{AN10}. We know that $|\J(G)| $ and $|\J(G_1)|$ are not independent. For example, if $ \J(G) $ is not cyclic, Corollary \ref{ifftheorem} tells us that the orders cannot be relatively prime. However, when $ \J(G) $ is cyclic, there is no obvious relationship between $|\J(G)| $ and $|\J(G_1)|$. So, the question becomes whether $|\J(G)| $ and $|\J(G_1)|$ behave like random integers when $\J(G) $ is cyclic and we choose an edge $ (x,y) $ at random to remove (or add if it is not an edge of the graph).  Performing various simulations led us to the following conjecture.

\begin{conjecture}\label{fixconj}
Let $ p $ be fixed and $ G_{n,p} $ be the Erd\H{o}s-R\'enyi random graph. For each graph, fix vertices $ x $ and $ y$. Then
\begin{align*}
\lim_{n\to \infty} \mathbb{P}\left([\delta_{xy}] \text{ generates } \J(G_{n,p}) \ |\ \J(G_{n,p}) \text{ cyclic}\right) = \zeta(2)^{-1} \approx 0.607927
\end{align*}
\end{conjecture}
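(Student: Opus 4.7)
By Corollary~\ref{ifftheorem}, $[\delta_{xy}]$ generates $\J(G_{n,p})$ if and only if $\gcd(|\J(G_{n,p})|,|\J(G_1)|)=1$, where $G_1$ denotes $G_{n,p}$ with the $\{x,y\}$ edge toggled. So the conjecture is equivalent to
\begin{align*}
\lim_{n\to\infty}\mathbb{P}\bigl(\gcd(|\J(G_{n,p})|,|\J(G_1)|)=1 \,\big|\, \J(G_{n,p}) \text{ cyclic}\bigr) = \prod_{\ell \text{ prime}}(1-\ell^{-2}).
\end{align*}
The right-hand side is precisely the natural density of coprime pairs of positive integers, so the underlying heuristic is that, once the structural obstruction of non-cyclicity is removed, the two orders behave in their $\ell$-adic digits like independent uniform integers.

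The strategy is to factor coprimality prime by prime. Let $A_\ell := \J(G_{n,p})[\ell^\infty]$ and $B_\ell := \J(G_1)[\ell^\infty]$ be the Sylow-$\ell$ parts. Cyclicity of $\J(G_{n,p})$ is the conjunction over primes $\ell$ of ``$A_\ell$ cyclic'', and coprimality is the conjunction over $\ell$ of ``$A_\ell=0$ or $B_\ell=0$''. Using the moment bounds of Wood \cite{W14} on large primes dividing $|\J(G_{n,p})|$ to handle the tail, it suffices to show that for every fixed prime $\ell$,
\begin{align*}
\lim_{n\to\infty}\mathbb{P}\bigl(A_\ell\neq 0 \text{ and } B_\ell\neq 0\,\big|\, A_\ell \text{ cyclic}\bigr)=\ell^{-2},
\end{align*}
and that the events over distinct primes asymptotically decouple. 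Multiplying across $\ell$ then yields $\prod_\ell(1-\ell^{-2})=\zeta(2)^{-1}$.

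The technical heart, and what I expect to be the main obstacle, is establishing the joint limit distribution of $(A_\ell,B_\ell)$. The reduced Laplacians satisfy $\tilde L_1 = \tilde L \pm e_y e_y^T$ (cf.\ the proof of Theorem~\ref{T:GOQ2.8}), so the pair $(A_\ell,B_\ell)$ is the Sylow-$\ell$ part of the cokernel pair of two strongly correlated random symmetric integer matrices differing by a deterministic rank-one symmetric perturbation. The plan is to extend Wood's moment method---which determines the limit distribution of $A_\ell$ through surjection counts $\mathbb{E}[\#\operatorname{Sur}(\J(G_{n,p}),H)]$ onto finite abelian $\ell$-groups $H$---to the joint moments $\mathbb{E}\bigl[\#\operatorname{Sur}(\J(G_{n,p}),H_1)\cdot\#\operatorname{Sur}(\J(G_1),H_2)\bigr]$. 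Because $\tilde L_1-\tilde L$ is supported in a single entry, the hard part is to show that the lone bit of extra randomness at the $\{x,y\}$ edge asymptotically equidistributes $C^{\tilde L}_{yy}\pmod{\ell^k}$ independently of the Sylow-$\ell$ structure of $\J(G_{n,p})$ under the cyclicity conditioning; essentially, the $\pm e_y e_y^T$ perturbation must be shown to average over its $\ell$-adic residues decoupled from $A_\ell$. Once this joint universality is in place, reading off the conditional probability $\ell^{-2}$ is a direct computation against the resulting limit measure, and the product across primes is routine.
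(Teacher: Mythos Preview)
The statement you are trying to prove is a \emph{conjecture} in the paper, not a theorem. The paper offers no proof; it motivates the conjecture by the heuristic that $|\J(G)|$ and $|\J(G_1)|$ might behave like independent random integers once cyclicity is imposed, and then supports the value $\zeta(2)^{-1}$ only with simulation data (100{,}000 trials per $n$, reported in the table of Section~\ref{sec:random}). So there is nothing to compare your argument against except an acknowledged open problem.

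Your write-up is a plausible strategy sketch, not a proof, and you essentially say so yourself. The genuine gap is exactly the step you flag as ``the technical heart'': establishing the joint limiting distribution of the Sylow-$\ell$ parts of $\operatorname{cok}(\tilde L)$ and $\operatorname{cok}(\tilde L\pm e_ye_y^T)$. Wood's moment method determines the marginal law of a single cokernel; extending it to the pair is not a routine adaptation. The two matrices share all but one entry, so the joint surjection counts you write down are highly correlated, and showing that the rank-one perturbation ``equidistributes $C^{\tilde L}_{yy}\pmod{\ell^k}$ independently of $A_\ell$ under the cyclicity conditioning'' is precisely the content of the conjecture restated in different language---you have not reduced the difficulty. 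Likewise, the asymptotic decoupling across distinct primes and the tail control needed to pass from finitely many primes to the infinite product are asserted but not argued. Until the joint-universality step is actually carried out, what you have is a restatement of the heuristic the paper already gives, dressed in the vocabulary of the moment method, rather than a proof.
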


If this is true and there is a reasonable amount of independence across choices of $ x $ and $ y$, we would expect to be able to find some edge $ x,y $ that allows us to make $ |\J(G)|$ and $ |\J(G_1)| $ coprime. This leads us to the following conjecture.

\begin{conjecture}\label{existconj}
Let $ p $ be fixed and $ G_{n,p} $ be the Erd\H{o}s-R\'enyi random graph. Then
\begin{align*}
\lim_{n\to \infty} \mathbb{P} \left(\exists\ [\delta_{xy}] \text{ generator of } \J(G_{n,p}) \ |\  \J(G_{n,p}) \text{ cyclic} \right) = 1
\end{align*}
Or equivalently, that
\begin{align*}
\lim_{n\to \infty} \mathbb{P} \left(\exists\ [\delta_{xy}] \text{ generator of } \J(G_{n,p})  \right) =  \lim_{n\to \infty} \mathbb{P} \left(\J(G_{n,p}) \text{ cyclic} \right)
\end{align*}
\end{conjecture}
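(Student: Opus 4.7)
The plan is to translate Conjecture \ref{existconj} into a statement about cofactors of the reduced Laplacian modulo prime divisors of $m=|\J(G_{n,p})|$, and then to control those cofactors one prime at a time. By Corollary \ref{ifftheorem}, the existence of a generating $[\delta_{xy}]$ is equivalent to the existence of a pair $(x,y)$ with $\gcd(|\J(G)|,|\J(G_1)|)=1$. Reducing the Laplacian at $x$, Corollary \ref{orderCor} applied to $\delta_{xy}$ yields $|[\delta_{xy}]|_{\J(G)} = m/\gcd(m, C^{\tilde L}_{yy})$, and by Lemma \ref{Cyylem} this cofactor has a transparent combinatorial meaning (spanning trees through $(x,y)$ when the edge is present, and the analogous spanning $2$-forest count otherwise). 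So the goal becomes
\begin{align*}
\lim_{n\to\infty}\mathbb{P}\Bigl(\exists\,(x,y):\gcd\bigl(m,C^{\tilde L}_{yy}\bigr)=1\ \Big|\ \J(G_{n,p})\text{ cyclic}\Bigr)=1,
\end{align*}
where the relevant cofactor is taken after reducing at $x$.

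Next I would decompose failure prime by prime. Let $F_q$ be the event that $q\mid m$ yet $q\mid C^{\tilde L}_{yy}$ for every pair $(x,y)$; the complement of the target event is $\bigcup_q F_q$, and I would split the union bound $\sum_q\mathbb{P}(F_q\mid \J\text{ cyclic})$ at a slowly growing threshold $Q=Q(n)\to\infty$. For primes $q>Q$, Wood's moment estimates from \cite{W14} control $\mathbb{E}[\#\Hom(\J(G_{n,p}),\mathbb{Z}/q\mathbb{Z})]$ and in particular $\mathbb{P}(q\mid m)$, producing tail bounds that make $\sum_{q>Q}\mathbb{P}(F_q)\to 0$. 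For each fixed small prime $q\le Q$, I would aim to show that conditional on $q\mid m$ and cyclicity the probability that every $C^{\tilde L}_{yy}$ vanishes modulo $q$ also tends to zero; summing over the finitely many such $q$ then closes the argument.

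The core difficulty, which I expect to be the main obstacle, is the small-$q$ estimate. The condition \emph{$q\mid C^{\tilde L}_{yy}$ for every reduction vertex $x$ and every $y$} is a strong structural constraint on $\tilde L\pmod q$: by Proposition \ref{CanIsom} and Corollary \ref{rankcor} it would force the entire diagonal of $C^{\tilde L}\pmod q$ to vanish simultaneously for every reduction, a configuration that should be rare for a typical random Laplacian. A natural route is to extend the moment method of \cite{W14} to mixed quantities $\mathbb{E}\bigl[\#\Hom(\J(G),A)\cdot\#\Hom(\J(G_1),A)\bigr]$ with $A=\mathbb{Z}/q\mathbb{Z}$ and $G_1=G\pm e_{xy}$, capturing the heuristic that rank-one perturbations of $\tilde L$ give nearly independent Jacobian orders modulo $q$; as a byproduct this would rigorously establish a quantitative version of Conjecture \ref{fixconj}. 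Extending the same moment calculation to products indexed by several pairs $(x_i,y_i)$ on disjoint vertex supports would provide the multi-pair decorrelation needed to drive $\mathbb{P}(F_q\mid\J\text{ cyclic})$ to zero exponentially in the number of pairs, and hence to settle Conjecture \ref{existconj}.
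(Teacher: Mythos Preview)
The paper does not prove this statement; it is explicitly labeled a \emph{conjecture} and is supported only by numerical simulations (the table in Section~\ref{sec:random}). There is thus no proof in the paper against which to compare your attempt.

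Your proposal is not a proof either, and you are candid about this. You correctly reduce the question, via Corollary~\ref{ifftheorem} and the computations in the proofs of Theorems~\ref{T:GOQ2.8} and~\ref{GCOQ}, to controlling the events $F_q$ that $q\mid m$ and $q\mid C^{\tilde L}_{yy}$ for every choice of reduction vertex and every $y$. (One small slip: Corollary~\ref{orderCor} gives $|[\delta_{xy}]|_{\J(G)}=m/\gcd(m,C^{\tilde L}\tilde\delta_{xy})$, a gcd over the \emph{entire} $y$-column, not just the diagonal entry; the reduction to $C^{\tilde L}_{yy}$ alone comes from the two-sided bound in Theorems~\ref{T:GOQ2.8} and~\ref{GCOQ} with $k_{xy}=\pm 1$.) The large-prime tail via Wood's moments is a reasonable strategy, though one would need to check that the moments of \cite{W14} give summable bounds on $\mathbb{P}(q\mid m)$ over $q>Q$, which is not automatic.

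The genuine gap is the small-prime step, and you identify it yourself as ``the main obstacle.'' Everything from ``A natural route is to extend the moment method'' onward is a research program, not an argument: you have not computed the mixed moments $\mathbb{E}[\#\Hom(\J(G),\mathbb{Z}/q\mathbb{Z})\cdot\#\Hom(\J(G_1),\mathbb{Z}/q\mathbb{Z})]$, you have not shown they factor approximately, and you have not carried out the multi-pair decorrelation. These are exactly the unresolved steps that make Conjecture~\ref{existconj} a conjecture rather than a theorem. Your outline is a plausible line of attack, but as written it does not constitute a proof.
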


\subsection{Experimental data}
To provide some support beyond intuition for Conjectures \ref{fixconj} and \ref{existconj}, we conducted simulations using SageMath \cite{sage} for various values of $ n$ (available on GitHub at \cite{git}). For each $ n $ we conducted 100,000 trials where each trial consisted of generating $ G_{n,1/2} $ with a cyclic Jacobian and checking the desired property. 
\begin{center}
\begin{tabular}{|c|c|c|}
\hline
n & $\mathbb{P}($ fixed $[\delta_{xy}]$ generates $\J(G)$) & $\mathbb{P}( \exists\  [\delta_{xy}]$ that generates $\J(G)$) \\ \hline
5 & 0.75561 & 1.00000 \\ \hline
10 & 0.58766 & 0.99890 \\ \hline
20 & 0.60732 & 1.00000 \\ \hline
40 & 0.60793 & 1.00000 \\ \hline
\end{tabular}
\end{center}
We believe that these data support both of the above conjectures.

\subsection{Conjecture on the order of $ [\delta_{xy}]$}

In Section ~\ref{sec:bounding}, we proved some lower bounds on the order of some $ [\delta_{xy}] $ in the graph. However, by experimentation these do not seem to be the best possible lower bounds. We conjecture that a better bound can be found if we assert that a graph is biconnected. Here biconnected means that any vertex can be removed while leaving a connected graph on $ n -1 $ vertices, which is equivalent to saying the graph cannot be decomposed into the wedge product of two proper subgraphs that are not just vertices. This condition prevents the construction of pathological counterexamples using wedge products and gives, based on computations of Dhruv Ranganathan and Jeffrey Yu \cite{dhruv}, the following conjecture.

\begin{conjecture}
Let $ G $ be a biconnected, simple graph with $ n $ vertices. Fix a vertex $ x $. Then there exists a vertex $ y $ such that $ |[\delta_{xy}]|_{\J(G)} \geq n$. 
\end{conjecture}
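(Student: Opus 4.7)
The plan is to sharpen the harmonic-function analysis used in Proposition~\ref{GSUMRY14}. Fix $x$, and for each candidate $y \neq x$ write $a_y = |[\delta_{xy}]|_{\J(G)}$ and let $\sigma = \sigma_y$ be the associated firing script with $a_y\delta_{xy} + L\sigma = 0$ and $\sigma(x) = 0$. By the strong maximum principle for integer-valued harmonic functions (as noted in the remark following Proposition~\ref{GSUMRY14}), $\sigma$ is a non-negative integer vector, harmonic on $V\setminus\{x,y\}$, attaining its minimum uniquely at $x$ and its maximum uniquely at $y$, with
\[
a_y \;=\; \sum_{v\sim x} \sigma(v).
\]
If $\Val(x) = n-1$ then Proposition~\ref{GSUMRY14} applied to any neighbor $y$ already gives $a_y \geq (n-1) + (n-2)/(\Val(y)-1) \geq n$, so the interesting case is $\Val(x) \leq n-2$, in which there exists a vertex $y_0 \neq x$ not adjacent to $x$.

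In the remaining case, the plan is to choose $y = y_0$ non-adjacent to $x$ and prove $\sum_{v\sim x}\sigma(v) \geq n$ by iterating the maximum-principle propagation of Proposition~\ref{GSUMRY14}: if some $v\sim x$ had $\sigma(v) = 1$, then by harmonicity at $v$ all other neighbors of $v$ satisfy $\sigma \geq 1$, and by biconnectedness of $G\setminus\{x\}$ one can chase paths from $v$ to $y$ to track how quickly $\sigma$ must climb to $\sigma(y)$. The target is a quantitative lower bound on the multiset $\{\sigma(v):v\sim x\}$ forcing the sum to be at least $n$. In parallel, I would use the algebraic reformulation $a_y = m/\gcd(m, C^{\tilde L}_{\cdot y})$ from Corollary~\ref{orderCor} and the matrix-tree interpretation $C^{\tilde L}_{vy} = \#\{\text{spanning 2-forests of }G\text{ with }x\text{ separated from both }v,y\}$, which often outperforms the pure effective-resistance bound $a_y \geq m/C^{\tilde L}_{yy}$ (off by a factor of $2$ already for $K_4$).

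The main obstacle is reconciling two very different regimes. In ``sparse'' biconnected graphs like $C_n$, the bound $a_y \geq n$ is attained by $\sigma$ taking $n-1$ distinct values along a long path. In the ``dense'' extreme $K_n$, $\sigma$ only takes the values $0, 1, 2$ and the bound $a_y = n$ is driven entirely by the large valency of $x$, not by $\sigma$ climbing along a long path. A uniform proof will need to handle both, and I expect the cleanest route is a case split on $\Val(x)$: combine Propositions~\ref{GSUMRY14} and~\ref{GSUMRY14v2} in the large-valency regime, and in the small-valency regime choose $y$ far from $x$ and exploit the fact that $\gcd(m, C^{\tilde L}_{\cdot y})$ can be strictly smaller than $C^{\tilde L}_{yy}$, using the full $y$-th column of cofactors. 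Controlling that gcd in terms of graph-theoretic invariants for small $\Val(x)$ is the hardest step; I would look for a combinatorial identity relating the off-diagonal 2-forest counts to biconnectedness to close the argument.
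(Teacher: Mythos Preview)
The statement you are attempting to prove is labeled a \emph{Conjecture} in the paper, and the paper offers no proof --- only computational evidence on small and random graphs. There is therefore no paper proof to compare your proposal against.

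As for the proposal itself: it is an outline, not a proof, and you say as much. The case $\Val(x) = n-1$ is genuinely handled: Proposition~\ref{GSUMRY14} applied to any neighbor $y$ gives $|[\delta_{xy}]| \geq (n-1) + (n-2)/(\Val(y)-1) \geq n$, since $\Val(y)\leq n-1$ in a simple graph. But for $\Val(x)\leq n-2$ you have only described a strategy. Choosing $y$ non-adjacent to $x$ and invoking the maximum principle gives $\sigma(v)\geq 1$ for all $v\neq x$, hence $a_y = \sum_{v\sim x}\sigma(v)\geq \Val(x)$, which is no better than what is already known. The ``chase paths from $v$ to $y$'' propagation idea is not made quantitative, and you explicitly leave open the step of controlling $\gcd(m, C^{\tilde L}_{\cdot y})$ via the off-diagonal two-forest counts. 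Your own observation that $C_n$ and $K_n$ achieve the bound through entirely different mechanisms --- long paths versus high valency --- is exactly why no uniform argument is in sight; a case split on $\Val(x)$ does not bridge this without a new idea in the sparse/low-valency regime.

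In short: the easy case is correct, the hard case is an open problem, and the paper agrees with you that it is open.
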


We further tested this conjecture on random graphs of varying size and all biconnected graphs on small numbers of vertices, and we could not find any counterexamples. An affirmative resolution of this conjecture could provide some leverage over finding classes of graphs with cyclic Jacobian. Moreover, following the work of \cite{GJRWW14}, this conjecture would prove that for any positive integer $ n $ there exists an integer $ k_n $ such that for all $ k > k_n $ there is no biconnected graph $ G $ with $ \J(G) \cong (\mathbb{Z}/n\mathbb{Z})^k$.


\bibliographystyle{siam}
\bibliography{generatorsofjacobians}

{\small {\sc Yale University, New Haven, CT 06520} \par
\textit{Email address, David Brandfonbrener}: {\tt \href{mailto:david.brandfonbrener@yale.edu}{david.brandfonbrener@yale.edu}} \par
\textit{Email address, Pat Devlin}: {\tt \href{mailto:patrick.devlin@yale.edu}{patrick.devlin@yale.edu}} \par
\textit{Email address, Netanel Friedenberg}: {\tt \href{mailto:netanel.friedenberg@yale.edu}{netanel.friedenberg@yale.edu}} \par
\textit{Email address, Yuxuan Ke}: {\tt \href{mailto:yuxuan.ke@yale.edu}{yuxuan.ke@yale.edu}} \par
\textit{Email address, Henry Reichard}: {\tt \href{mailto:henry.reichard@yale.edu}{henry.reichard@yale.edu}} \par
\textit{Email address, Ethan Sciamma}: {\tt \href{mailto:ethan.sciamma@yale.edu}{ethan.sciamma@yale.edu}} \\

{\sc The College of New Jersey, Ewing, NJ 08628} \par
\textit{Email address, Steffen Marcus}: {\tt \href{mailto:marcuss@tcnj.edu}{marcuss@tcnj.edu}} \\
\end{document}